\numberwithin{equation}{section}
\newtheorem{theorem}{Theorem}[section]
\newtheorem{proposition}[theorem]{Proposition}
\newtheorem{lemma}[theorem]{Lemma}
\newtheorem{corollary}[theorem]{Corollary}
\theoremstyle{definition}
\theoremstyle{remark}
\newtheorem{remark}[theorem]{Remark}
\newtheorem{claim}[theorem]{Claim}
\renewcommand{\ker}{\operatorname{Ker}}
\newcommand{\Z}{\mathbb{Z}}
\newcommand{\Q}{\mathbb{Q}}
\newcommand{\R}{\mathbb{R}}
\newcommand{\C}{\mathbb{C}}
\newcommand{\proj}{{\mathbb P}}
\newcommand{\Fn}{\mathcal{F}_{n}}
\newcommand{\Fncpt}{\bar{\mathcal{F}}_{n}}
\newcommand{\OL}{{\rm O}^{+}(L)}
\newcommand{\Ost}{\tilde{{\rm O}}^{+}(L)}
\newcommand{\GD}{\Gamma \backslash \mathcal{D}}
\newcommand{\DL}{\mathcal{D}_{L}}
\newcommand{\G}{\mathbb{G}(1, 5)}
\begin{document}

\title[]{Kodaira dimension of universal holomorphic symplectic varieties}
\author[]{Shouhei Ma}
\thanks{Supported by JSPS KAKENHI 15H05738 and 17K14158.} 
\address{Department~of~Mathematics, Tokyo~Institute~of~Technology, Tokyo 152-8551, Japan}
\email{ma@math.titech.ac.jp}
\subjclass[2010]{}
\keywords{} 

\begin{abstract}
We prove that the Kodaira dimension of the $n$-fold universal family 
of lattice-polarized holomorphic symplectic varieties with 
dominant and generically finite period map 
stabilizes to the moduli number when $n$ is sufficiently large. 
Then we study the transition of Kodaira dimension explicitly, 
from negative to nonnegative, 
for known explicit families of polarized symplectic varieties. 
In particular, we determine the exact transition point 
in the cases of Beauville-Donagi and Debarre-Voisin, 
where the Borcherds $\Phi_{12}$ form plays a crucial role. 
\end{abstract} 

\maketitle

\section{Introduction}\label{sec:intro}

The discovery of Beauville-Donagi \cite{BD} that 
the Fano variety of lines on a smooth cubic fourfold  
is a holomorphic symplectic variety deformation equivalent to 
the Hilbert squares of $K3$ surfaces of genus $8$  
was the first example of explicit geometric construction of polarized holomorphic symplectic varieties. 
Gradually further examples, 
all deformation equivalent to Hilbert schemes of $K3$ surfaces, 
have been found by 
\begin{itemize}
\item Iliev-Ranestad \cite{IR} as the varieties of power sums of cubic fourfolds, 
\item O'Grady \cite{OG} as the double EPW sextics, 
\item Debarre-Voisin \cite{DV} as the zero loci of sections of a vector bundle on the Grassmannian $G(6, 10)$, 
\item Lehn-Lehn-Sorger-van~Straten \cite{LLSS} using the spaces of 
twisted cubics on cubic fourfolds, and more recently 
\item Iliev-Kapustka-Kapustka-Ranestad \cite{IKKR} as the double EPW cubes. 
\end{itemize}

The moduli spaces $\mathcal{M}$ of these polarized symplectic varieties are 
unirational by construction. 
However, if we consider 
the $n$-fold fiber product ${\Fn}\to\mathcal{M}$ of the universal family $\mathcal{F}\to\mathcal{M}$ 
(more or less the moduli space of the varieties with $n$ marked points or its double cover), 
its Kodaira dimension $\kappa({\Fn})$ is nondecreasing with respect to $n$ (\cite{Ka}), 
and bounded by $\dim \mathcal{M}=20$ (\cite{Ii}). 
The main purpose of this paper is to study the transition of $\kappa({\Fn})$ as $n$ grows, 
especially from $\kappa=-\infty$ to $\kappa\geq 0$, 
by using modular forms on the period domain. 
Moreover, we prove that $\kappa({\Fn})$ stabilizes to $\dim \mathcal{M}$ 
at large $n$ for more general families of lattice-polarized holomorphic symplectic varieties. 

Our main result is summarized as follows. 

\begin{theorem}\label{thm:main}
Let ${\Fn}$ be the $n$-fold universal family of 
polarized holomorphic symplectic varieties of 
Beauville-Donagi or Debarre-Voisin or  
Lehn-Lehn-Sorger-van~Straten or 
Iliev-Ranestad or O'Grady or Iliev-Kapustka-Kapustka-Ranestad type. 
Then ${\Fn}$ is 
unirational / $\kappa({\Fn})\geq 0$ / $\kappa({\Fn})>0$ 
for the following bounds of $n$.   

\begin{center}
\begin{tabular}{cccccccccccc} 
\toprule  
     & BD   & DV   & LLSS   & IR   & OG & IKKR   
\\ \midrule  
unirational & 13 & 5 & 5 & 1 & 0 & 0  
\\ \midrule  
$\kappa \geq 0$ & 14 & 6 & 7 & 6 & 11 & 16  
\\ \midrule  
$\kappa > 0$ & 23 & 13 & 12 & 12 &  19 & 20  
\\ \bottomrule 
\end{tabular}
\end{center}

In all cases, $\kappa(\mathcal{F}_{n})=20$ when $n$ is sufficiently large. 
The stabilization $\kappa(\mathcal{F}_{n})=\dim \mathcal{M}$ at large $n$ 
holds more generally for families $\mathcal{F}\to \mathcal{M}$ 
of lattice-polarized holomorphic symplectic varieties 
whose period map is dominant and generically finite. 
\end{theorem}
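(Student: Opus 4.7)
The three rows of the table are of very different character: the top row is proven by explicit rational parameterisations, whereas the bounds $\kappa\geq 0$ and $\kappa>0$ are proven by constructing (pluri)canonical sections of $\Fn$ as automorphic forms on the period domain. Because $\kappa(\Fn)$ is nondecreasing in $n$ \cite{Ka}, each column reduces to handling its critical value of $n$.

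For the top row (unirationality), in each family $\mathcal{M}$ is birational to a quotient $G\backslash U$ with $U$ open in a projective space or Grassmannian and $G$ a linear group: cubic fourfolds in $\proj^5$ modulo $PGL_6$ for BD, trivectors on $\C^{10}$ modulo $GL_{10}$ for DV, and analogous descriptions for the remaining families. A point of the fibre of $\mathcal{F}\to\mathcal{M}$ is an incidence datum lying in an obviously rational ambient variety $Y$ (a line in $\proj^5$ for BD, a $6$-subspace of $\C^{10}$ for DV, a twisted cubic on the cubic fourfold for LLSS, and similarly for the other families). The incidence locus $I_n\subset Y^n\times U$ is cut out by conditions that are \emph{linear} on $U$, so as long as the total number of conditions imposed by the $n$ marked fibre points does not exceed $\dim U$, the projection $I_n\to Y^n$ is a projective bundle over a rational base; rationality propagates to $I_n$, and descending to the $G$-quotient gives the unirationality of $\Fn$. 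The table entries in this row are precisely the largest $n$ for which this dimension count remains feasible.

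For the two remaining rows I would use the period map. Birationally $\Fn$ is an arithmetic quotient $\Ost\backslash\tilde{\mathcal{F}}_{n}$ with $\tilde{\mathcal{F}}_{n}\to\DL$ the pullback of the universal $n$-fold family. Using the identification $K_{X^{n}/\DL}\cong \lambda^{\otimes 2n}$, where $\lambda$ is the Hodge line bundle obtained from the fibrewise symplectic form, and $K_{\DL/\Ost}\cong \lambda^{\otimes 20}$ up to boundary and branch contributions, on a toroidal compactification the pluricanonical sections of $\Fn$ correspond to $\Ost$-automorphic forms on $\DL$ of weight $k(20+2n)$ vanishing along the ramification divisor of $\DL\to\Ost\backslash\DL$ and along the boundary divisors to orders dictated by the Reid--Tai inequality. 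At the critical $n$ for the $\kappa\geq 0$ row I would construct one such form by quasi-pulling back the Borcherds form $\Phi_{12}$ of weight $12$ from the $26$-dimensional type-IV domain of signature $(2,26)$ to $\DL$; the quasi-pullback has weight $12+r/2$, where $r$ counts the $(-2)$-roots in the orthogonal complement of the primitive embedding of the relevant lattice into $II_{2,26}$, and the critical $n$ is determined by matching this weight against the target $20+2n$ and simultaneously satisfying the vanishing conditions. For the $\kappa>0$ row I would produce at the corresponding critical $n$ a \emph{second} pluricanonical section algebraically independent from the first, either as a further quasi-pullback of $\Phi_{12}$ at an enlarged embedding, a Gritsenko lift, or a holomorphic Eisenstein series; two algebraically independent sections of $K^{\otimes k}$ force $\kappa\geq 1$.

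The main obstacle will be the sharp numerical bookkeeping required to pin down the exact table entries, most critically the transitions $n=23$ (BD) and $n=13$ (DV) that the abstract flags as the distinguished r\^ole of $\Phi_{12}$. This demands (i) identifying a primitive embedding $L\hookrightarrow II_{2,26}$ with an orthogonal root system of exactly the correct rank, so that the quasi-pullback of $\Phi_{12}$ has minimal admissible weight among all pluricanonical forms, and (ii) establishing independently a lower bound on the weight of $\Ost$-automorphic forms satisfying the required vanishing, in order to rule out any smaller $n$; the latter does not follow from the construction and must be obtained separately. Executing this combinatorial step uniformly across the six families, matching the specific Noether--Lefschetz divisors and Reid--Tai contributions arising in each case, is where the argument will be most delicate.
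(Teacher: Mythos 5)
Your overall strategy points in the right direction (incidence varieties for unirationality, quasi-pullback of $\Phi_{12}$ for the lower rows), but there are several concrete gaps. First, for the modular-forms half you propose a pluricanonical framework: sections of $K^{\otimes k}$ matched with forms of weight $k(20+2n)$ subject to Reid--Tai and ramification/boundary vanishing. The paper never needs this. It establishes an injection $S_{b+dn}(\Gamma,\det)\hookrightarrow H^{0}(K_{\Fncpt})$ at the level of \emph{canonical} forms only ($k=1$), where the cusp condition is shown to be \emph{equivalent} to finiteness of $\int\omega\wedge\bar\omega$, hence to extendability over a smooth projective model; no toroidal compactification or Reid--Tai analysis enters. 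Your weight formula also silently assumes fibre dimension $4$ ($d=2$) throughout, but the relevant weight is $20+dn$ with $d=4$ for LLSS and $d=3$ for IKKR, which is exactly what produces the entries $7$ and $16$ in the $\kappa\ge 0$ row. Second, your stated ``main obstacle'' (ii) --- proving a lower bound on the weight of admissible forms to rule out smaller $n$ --- is not what makes the transition exact: sharpness at $n=14$ (BD) and $n=6$ (DV) comes from the \emph{geometric} unirationality of $\mathcal{F}_{13}$ and $\mathcal{F}_{5}$, not from any non-existence statement for modular forms. Finally, for $\kappa>0$ two \emph{linearly} independent cusp forms of the right weight suffice (the ratio is a nonconstant function on the canonical image); these are produced by multiplying the quasi-pullback with Gritsenko lifts, and you do not address the stabilization $\kappa(\Fn)=20$ at all, which requires multiplying a fixed cusp form by a space of modular forms defining a generically finite map on $\GD$.

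The more serious gap is in the unirationality row. Your claim that ``as long as the number of conditions does not exceed $\dim U$, the projection $I_n\to Y^n$ is a projective bundle over a rational base'' is false as stated: the linear-algebra count only bounds the fibre dimension from below over the \emph{image} $\pi(I_n)$, and if $\pi$ is not dominant you obtain a projective bundle over a subvariety of $Y^n$ whose rationality is unknown. Establishing dominance of $F_{13}\to\mathbb{G}(1,5)^{13}$ (BD) and $F_{5}\to G(6,10)^{5}$ (DV) is precisely the delicate point; the paper proves it by contradiction using the already-established fact that $\mathcal{F}_{14}$ (resp.\ $\mathcal{F}_{6}$) is not uniruled since it carries a nonzero canonical form --- a ``transcendental'' input into a geometric statement that your proposal has no substitute for. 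The LLSS case needs a further degeneration argument (flatness of the nets of twisted cubics on cubic surfaces with ADE singularities) to identify the component of the incidence dominating $H^{5}$ with the one dominating $|\mathcal{O}_{\proj^{5}}(3)|$, which is likewise not a dimension count.
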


This table means, for example in the Beauville-Donagi (BD) case, 
that ${\Fn}$, the moduli space of Fano varieties of cubic fourfolds with $n$ marked points 
(or equivalently cubic fourfolds with $n$ marked lines),  
is unirational when $n\leq 13$, 
has $\kappa(\mathcal{F}_{n})\geq 0$ when $n\geq 14$, 
and $\kappa(\mathcal{F}_{n}) > 0$ when $n\geq 23$. 
In particular, we find the exact transition point from $\kappa=-\infty$ to $\kappa\geq0$ 
in the Beauville-Donagi and Debarre-Voisin cases, 
and nearly exact in the Lehn-Lehn-Sorger-van Straten case. 
On the other hand, it would not be easy to explicitly calculate a bound for $\kappa=20$; 
in fact, we expect that the transition of Kodaira dimension would be sudden, 
so the actual bound for $\kappa=20$ 
would be quite near to the (actual) bound for $\kappa\geq 0$. 
(In this sense, the above bound for $\kappa>0$ should be temporary.) 
 
Markman \cite{Mark2} gave an analytic construction of 
general marked universal families over (non-Haussdorff, unpolarized) period domains. 
Here we take more ad hoc construction. 
The space ${\Fn}$ (birationally) parametrizes the isomorphism classes of 
the $n$-pointed polarized symplectic varieties except for the two double EPW cases, 
while in those cases it is a double cover of the moduli space. 

Theorem \ref{thm:main} in the direction of $\kappa\geq 0$ 
is proved by using modular forms on the period domain. 
For a family $\mathcal{F}\to \mathcal{M}$ of lattice-polarized holomorphic symplectic varieties 
of dimension $2d$ whose period map is dominant and generically finite, 
we construct an injective map (Theorem \ref{thm:cusp-canonical}) 
\begin{equation}\label{eqn:cusp canonical intro}
S_{b+dn}(\Gamma, \det) \hookrightarrow H^{0}(K_{{\Fncpt}}), 
\end{equation}
where ${\Fncpt}$ is a smooth projective model of ${\Fn}$, 
$\Gamma$ is an arithmetic group containing the monodromy group, 
$S_{k}(\Gamma, \det)$ is the space of $\Gamma$-cusp forms of weight $k$ and character $\det$, 
and $b=\dim \mathcal{M}$. 
For the above six cases, we construct cusp forms explicitly 
by using quasi-pullback of the Borcherds $\Phi_{12}$ form (\cite{Bo}, \cite{BKPSB}) 
and its product with modular forms obtained by the Gritsenko lifting (\cite{Gr}). 
The same technique of construction should be also applicable to lattice-polarized families, 
of which more examples would be available. 

The proof of unirationality is done by geometric argument, 
but in the Beauville-Donagi and Debarre-Voisin cases, 
we also make use of the ``transcendental" results 
$\kappa(\mathcal{F}^{BD}_{14})\geq 0$ and $\kappa(\mathcal{F}^{DV}_{6})\geq 0$ 
when checking nondegeneracy of certain maps in the argument (Claim \ref{lem:BD dominant}). 

A similar result has been obtained for $K3$ surfaces of low genus $g$ (\cite{Ma}), 
where the quasi-pullback $\Phi_{K3,g}$ of $\Phi_{12}$ was crucial too. 
Moreover, when $3\leq g\leq 10$, the weight of $\Phi_{K3,g}$ minus $19$ 
coincided with the dimension of a representation space appearing in the projective model of the $K3$ surfaces. 
In the present paper we see no such a direct identity, 
but a ``switched'' identity between $K3$ surfaces of genus $2$ and cubic fourfolds 
(Remark \ref{remark: g=2 K3 and cubic4}).  

This paper is organized as follows. 
\S \ref{sec:recall} is a recollection of holomorphic symplectic manifolds and modular forms. 
In \S \ref{ssec:cusp-canonical} we construct 
the map \eqref{eqn:cusp canonical intro} (Theorem \ref{thm:cusp-canonical}) 
and prove the latter half of Theorem \ref{thm:main} (Corollary \ref{cor:stabilize}). 
The first half of Theorem \ref{thm:main} is proved in 
\S \ref{sec:BD} -- \S \ref{sec:EPW}.

Throughout this paper, 
a \textit{lattice} means a free abelian group of finite rank 
endowed with a nondegenerate integral symmetric bilinear form. 
$A_{k}$, $D_{l}$, $E_{m}$ stand for the \textit{negative-definite} root lattices 
of respective types. 
The even unimodular lattice of signature $(1, 1)$ is denoted by $U$. 
No confusion will likely to occur when $U$ is also used for an open set of a variety. 
The Grassmannian parametrizing $r$-dimensional linear subspaces of 
${\C}^{N}$ is denoted by $G(r, N)=\mathbb{G}(r-1, N-1)$.  
We freely use the fact (\cite{GIT}) that 
if $G={\rm PGL}_{N}$ acts on a projective variety $X$ and 
$U$ is a $G$-invariant Zariski open set of $X$ contained in the stable locus, 
then a geometric quotient $U/G$ exists. 
If no point of $U$ has nontrivial stabilizer, $U\to U/G$ is a principal $G$-bundle in the etale topology. 
In that case, every $G$-linearized vector bundle on $U$ descends to a vector bundle on $U/G$. 
Similarly, if $V$ is a representation of ${\rm SL}_{N}$, 
a geometric quotient $({\proj}V\times U)/G$ exists 
as a Brauer-Severi variety over $U/G$.  
If $Y$ is a normal $G$-invariant subvariety of ${\proj}V\times U$, 
its geometric quotient $Y/G$ exists as the image of $Y$ in $({\proj}V\times U)/G$.

I would like to thank Kieran O'Grady for 
valuable advice on double EPW sextics. 


\section{Preliminaries}\label{sec:recall}

In this section we recall basic facts about 
holomorphic symplectic manifolds (\S \ref{ssec:HSV}) 
and orthogonal modular forms (\S \ref{ssec:modular}). 

\subsection{Holomorphic symplectic manifolds}\label{ssec:HSV}

A compact K\"ahler manifold $X$ of dimension $2d$ is called a 
\textit{holomorphic symplectic manifold} if 
it is simply connected and 
$H^{0}(\Omega_{X}^{2})={\C}\omega$ 
for a nowhere degenerate $2$-form $\omega$. 
There exists a non-divisible integral symmetric bilinear form $q_{X}$ 
of signature $(3, b_{2}(X)-3)$ on $H^{2}(X, {\Z})$, 
called the \textit{Beauville form} (\cite{Be}), 
and a constant $c_{X}$ called the \textit{Fujiki constant}, 
such that 
$\int_{X}v^{2d}=c_{X}\cdot q_{X}(v, v)^{d}$ 
for every $v\in H^{2}(X, {\Z})$. 
In particular, for $\omega\in H^{0}(\Omega_{X}^{2})$, 
we have $q_{X}(\omega, \omega)=0$ and 
\begin{equation}\label{eqn:Beauville paring 2-form}
q_{X}(\omega, \bar{\omega})^{d} = C \int_{X}(\omega \wedge \bar{\omega})^{d}
\end{equation}
for a suitable constant $C$. 

A holomorphic symplectic manifold $X$ is said to be of $K3^{[n]}$ \textit{type} 
if it is deformation equivalent to the Hilbert scheme of $n$ points on a $K3$ surface. 
The Beauville lattice of such $X$ is isometric to 
$L_{2t}=3U\oplus 2E_{8} \oplus \langle -2t \rangle$ 
where $t=n-1$ (\cite{Be}). 
Let $h\in L_{2t}$ be a primitive vector of norm $2D>0$. 
The orthogonal complement $h^{\perp}\cap L_{2t}$ is described as follows 
(\cite{GHS10} \S 3). 
For simplicity we assume $(t, D)=1$, which holds in later sections except \S \ref{ssec:IKKR}.  
We have either $(h, L_{2t})={\Z}$ or $2{\Z}$. 
In the former case, $h$ is called of \textit{split type}, and 
$h^{\perp}\cap L_{2t}$ is isometric to  
$2U \oplus 2E_{8} \oplus \langle -2t \rangle \oplus \langle -2D \rangle$. 
In the latter case, $h$ is called of \textit{non-split type}, and 
\begin{equation}\label{eqn:lattice non-split}
h^{\perp}\cap L_{2t} \simeq 
2U \oplus 2E_{8} \oplus 
\begin{pmatrix} -2t & t \\ t & -(D+t)/2 \end{pmatrix}, 
\end{equation} 
which has determinant $tD$. 
In \S \ref{sec:BD} -- \S \ref{sec:IR}, 
$h$ will be of non-split type 
and the determinant $tD$ will be a prime number of class number $1$.

\subsection{Modular forms}\label{ssec:modular}

Let $L$ be a lattice of signature $(2, b)$ with $b\geq 3$. 
The dual lattice of $L$ is denoted by $L^{\vee}$. 
We write $A_{L}=L^{\vee}/L$ for the discriminant group of $L$. 
$A_{L}$ is equipped with a natural ${\Q}/{\Z}$-valued bilinear form, 
which when $L$ is even is induced from a natural ${\Q}/2{\Z}$-valued quadratic form. 
The Hermitian symmetric domain $\mathcal{D}=\mathcal{D}_{L}$ attached to $L$ 
is defined as either of the two connected components of the space 
\begin{equation*}
\{ \: {\C}\omega \in {\proj}L_{{\C}} \: | \: 
(\omega, \omega)=0, (\omega, \bar{\omega})>0 \: \}. 
\end{equation*}
Let ${\OL}$ be the subgroup of the orthogonal group ${\rm O}(L)$ 
preserving the component $\mathcal{D}$. 
We write ${\Ost}$ for the kernel of ${\OL}\to {\rm O}(A_{L})$. 
When $A_{L} \simeq {\Z}/p$ for a prime $p$, 
which holds in \S \ref{sec:BD} -- \S \ref{sec:IR}, 
we have ${\rm O}(A_{L})=\{ \pm {\rm id} \}$ 
and so ${\OL}=\langle {\Ost}, -{\rm id} \rangle$. 

Let $\mathcal{L}$ be the restriction of the tautological line bundle 
$\mathcal{O}_{{\proj}L_{{\C}}}(-1)$ over $\mathcal{D}$. 
$\mathcal{L}$ is naturally ${\rm O}^{+}(L_{{\R}})$-linearized. 
Let $\Gamma$ be a finite-index subgroup of ${\rm O}^{+}(L)$ and  
$\chi$ be a unitary character of $\Gamma$. 
A $\Gamma$-invariant holomorphic section of 
$\mathcal{L}^{\otimes k}\otimes \chi$ 
over $\mathcal{D}$ is called a \textit{modular form} of weight $k$ 
and character $\chi$ with respect to $\Gamma$. 
When it vanishes at the cusps, it is called a \textit{cusp form} 
(see, e.g., \cite{GHS}, \cite{Ma} for the precise definition.) 
We write $M_{k}(\Gamma, \chi)$ for the space of $\Gamma$-modular forms of weight $k$ and character $\chi$, 
and $S_{k}(\Gamma, \chi)$ the subspace of cusp forms. 
We especially write $M_{k}(\Gamma)=M_{k}(\Gamma, 1)$. 
If $\Gamma' \lhd \Gamma$ is a normal subgroup of finite index, 
the quotient group $\Gamma/\Gamma'$ acts on $M_{k}(\Gamma', \chi)$ 
by translating $\Gamma'$-invariant sections by elements of $\Gamma$. 
We also remark that when $\chi=\det$ and $k \equiv b$ mod $2$, 
$-{\rm id}$ acts trivially on $\mathcal{L}^{\otimes k}\otimes \det$, 
so that 
\begin{equation}\label{eqn:-id effect}
M_{k}(\langle \Gamma, -{\rm id} \rangle, \det) = M_{k}(\Gamma, \det). 
\end{equation}
When $k \not\equiv b$ mod $2$, 
$M_{k}(\langle \Gamma, -{\rm id} \rangle, \det)$ is zero.

The Hermitian form $(\cdot, \bar{\cdot})$ on $L_{{\C}}$ 
defines an ${\rm O}^{+}(L_{{\R}})$-invariant Hermitian metric 
on the line bundle $\mathcal{L}$. 
This defines a $\Gamma$-invariant Hermitian metric on $\mathcal{L}^{\otimes k}\otimes \chi$  
which we denote by $( \: , \: )_{k,\chi}$. 
We especially write $( \: , \: )_{k}=( \: , \: )_{k,1}$. 
Let ${\rm vol}$ be the ${\rm O}^{+}(L_{{\R}})$-invariant volume form on $\mathcal{D}$, 
which exists and is unique up to constant. 

\begin{lemma}\label{lem:Petersson cusp}
Let $\mathcal{M}'$ be a Zariski open set of ${\GD}$ 
and $\mathcal{D}'\subset \mathcal{D}$ be its inverse image. 
Let $\Phi$ be a $\Gamma$-invariant holomorphic section of 
$\mathcal{L}^{\otimes k}\otimes \chi$ 
defined over $\mathcal{D}'$ with $k\geq b$. 
Then 
$\Phi\in S_{k}(\Gamma, \chi)$ 
if and only if 
$\int_{\mathcal{M}'}(\Phi, \Phi)_{k,\chi}{\rm vol} < \infty$. 
\end{lemma}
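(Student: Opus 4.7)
The plan is to reduce the question to local analysis near each cusp of the Baily-Borel compactification of $\GD$. Away from the cusps, a Siegel-domain description yields a cocompact exhaustion on which $(\Phi, \Phi)_{k,\chi}$ is continuous and ${\rm vol}$ has finite total mass, so any potential divergence is concentrated near the boundary. The complement $\mathcal{D} \setminus \mathcal{D}'$ projects to a proper closed analytic subset of $\GD$ and thus has measure zero; finiteness of the Petersson integral on $\mathcal{M}'$ combined with an $L^{2}$-boundedness argument and the Koecher principle forces $\Phi$ to extend holomorphically across this complement, so I may assume from the start that $\Phi$ is defined on all of $\mathcal{D}$.

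First I would treat a $0$-dimensional cusp $F$ attached to a primitive isotropic vector $e \in L$. Realizing a neighborhood of $F$ as a tube domain $\mathcal{T} = \{ z \in U_{\C} \mid \mathrm{Im}(z) \in C \}$, where $U = (e^{\perp} \cap L)/\Z e$ has signature $(1, b-1)$ and $C$ is a connected component of the positive cone of $U_{\R}$, the standard trivialization of $\mathcal{L}$ attached to $e$ represents $\Phi$ by a holomorphic function $f(z)$, and a direct calculation gives
\begin{equation*}
(\Phi, \Phi)_{k,\chi}\, {\rm vol} \;=\; c \cdot |f(z)|^{2}\, (y, y)^{k-b}\, dx\, dy, \qquad y = \mathrm{Im}(z),
\end{equation*}
to be integrated over a fundamental domain equal to a real torus in $x$ times a truncated cone in $y$. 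Expand $f(z) = \sum_{\ell} a(\ell)\, e^{2 \pi i (\ell, z)}$ over the dual cone; integration in $x$ decouples the Fourier series into $\sum_{\ell} |a(\ell)|^{2} \int e^{-4\pi(\ell, y)} (y, y)^{k-b}\, dy$. The cusp condition at $F$ is exactly the vanishing $a(\ell) = 0$ for $\ell$ with $(\ell, \ell) = 0$, i.e.\ on the boundary of the dual cone. If it holds, every surviving term enjoys exponential decay in the direction of $\ell$ and dominates the polynomial factor $(y, y)^{k-b}$, so the integral converges; conversely, any nonzero $a(\ell_{0})$ with $(\ell_{0}, \ell_{0}) = 0$ contributes a term without exponential decay transverse to $\ell_{0}$, and combined with $(y, y)^{k-b} \geq 1$ (which is where the hypothesis $k \geq b$ enters), this forces divergence.

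The $1$-dimensional cusps are handled analogously via the Fourier-Jacobi expansion along an isotropic plane: the cuspidal vanishing condition at that boundary component becomes the vanishing of the constant Jacobi-form coefficient in the expansion, and one checks by the same exponential-versus-polynomial comparison that convergence of the integral near the cusp is equivalent to this vanishing. Combining the local analyses over all cusps yields the claimed equivalence, and the extension of $\Phi$ across $\mathcal{D} \setminus \mathcal{D}'$ falls out as a by-product. The hard part will be the clean bookkeeping at the $1$-dimensional cusps, where the Fourier-Jacobi decomposition must be unfolded so that each theta-component can be estimated independently, and one has to verify that $k \geq b$ is precisely the threshold that keeps the factor $(y, y)^{k-b}$ from decaying. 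This kind of Fourier analysis at the cusps of Type IV domains is classical and follows the framework used in \cite{GHS} and \cite{Ma}.
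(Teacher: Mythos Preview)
Your overall strategy is sound, but the emphasis is inverted relative to what is actually new in this lemma. The paper simply cites \cite{Ma} Proposition 3.5 for the case $\mathcal{D}'=\mathcal{D}$, i.e., for the equivalence of cuspidality and finiteness of the Petersson integral once $\Phi$ is already a section over all of $\mathcal{D}$. So your detailed Fourier and Fourier--Jacobi analysis at the $0$- and $1$-dimensional cusps, while correct in outline, reproduces an argument that is merely quoted. The only genuinely new content is the extension of $\Phi$ from $\mathcal{D}'$ to $\mathcal{D}$, and that is precisely the step you handwave.

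You justify the extension by an ``$L^{2}$-boundedness argument and the Koecher principle''. The Koecher principle is the wrong tool here: it concerns automatic holomorphic extension to the Satake boundary (the cusps), not across interior analytic subsets of $\mathcal{D}$. What is actually needed is the elementary local computation the paper gives. Near a general point of a codimension-$1$ component $H=\{z=0\}$ of $\mathcal{D}\setminus\mathcal{D}'$, the Hermitian metric on $\mathcal{L}^{\otimes k}\otimes\chi$ and the invariant volume form are both bounded above and below by positive constants, so if $\Phi$ has a pole of order $a>0$ along $H$ then
\begin{equation*}
\int_{\varepsilon\leq |z|\leq 1}(\Phi,\Phi)_{k,\chi}\,{\rm vol}\;\geq\; C\int_{\varepsilon}^{1} r^{-2a+1}\,dr \;\to\;\infty \qquad (\varepsilon\to 0).
\end{equation*}
That is the whole argument, and it does not use $k\geq b$ at all; the weight hypothesis is used only in the cusp analysis you are citing from \cite{Ma}. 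Your phrase ``$L^{2}$-boundedness'' may have been gesturing at this computation, but as written the step is a gap: make it explicit and drop the reference to Koecher.
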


\begin{proof}
In \cite{Ma} Proposition 3.5, this is proved when $\mathcal{D}'=\mathcal{D}$, 
i.e., $\Phi\in M_{k}(\Gamma, \chi)$. 
Hence it suffices here to show that 
$\int_{\mathcal{M}'}(\Phi, \Phi)_{k,\chi}{\rm vol} < \infty$ 
implies holomorphicity of $\Phi$ over $\mathcal{D}$. 
Let $H$ be an irreducible component of $\mathcal{D}-\mathcal{D}'$. 
We may assume that $H$ is of codimension $1$. 
If $\Phi$ has a pole along $H$, say of order $a>0$, 
a local calculation shows that 
in a neighborhood of a general point of $H$, 
with $H$ locally defined by $z=0$, 
the integral 
\begin{eqnarray*}
\int_{\varepsilon \leq |z| \leq 1}  (\Phi, \Phi)_{k,\chi}{\rm vol} 
& \geq &  
C \int_{\varepsilon \leq |z| \leq 1} |z|^{-2a}dz\wedge d\bar{z} \\ 
& = &  
C \int_{0}^{2\pi} d\theta \int_{\varepsilon}^{1}r^{-2a+1}dr 
\qquad 
(z=re^{i\theta}) 
\end{eqnarray*}
must diverge as $\varepsilon \to 0$. 
\end{proof}

Let $II_{2,26}=2U\oplus 3E_{8}$ be the even unimodular lattice of signature $(2, 26)$. 
Borcherds \cite{Bo} discovered a modular form $\Phi_{12}$ of weight $12$ and character $\det$ 
for ${\rm O}^{+}(II_{2,26})$. 
The \textit{quasi-pullback} of $\Phi_{12}$ is defined as follows (\cite{Bo}, \cite{BKPSB}). 
Let $L$ be a sublattice of $II_{2,26}$ of signature $(2, b)$ and $N=L^{\perp}\cap II_{2,26}$. 
Let $r(N)$ be the number of $(-2)$-vectors in $N$. 
Then 
\begin{equation*}
\Phi_{12}|_{L} := 
\left. \frac{\Phi_{12}}{\prod_{\delta}(\delta, \cdot)} \: \right|_{{\DL}} 
\end{equation*}
where $\delta$ runs over all $(-2)$-vectors in $N$ up to $\pm 1$, 
is a nonzero modular form on ${\DL}$ of weight $12+r(N)/2$ and character $\det$ for ${\Ost}$. 
Moreover, when $r(N)>0$, $\Phi_{12}|_{L}$ is a cusp form (\cite{GHS}). 

In later sections, we will embed $h^{\perp}\cap L_{2t}$ into $II_{2,26}$ 
by embedding the last rank $2$ component of \eqref{eqn:lattice non-split} into $E_{8}$. 
The following model of $E_{8}$ will be used: 
\begin{equation}\label{eqn:E8}
E_{8} = 
\{ \: (x_{i})\in {\Q}^{8} \: | \: 
\forall x_{i}\in {\Z} \: \textrm{or} \: \forall x_{i}\in {\Z}+1/2, \; 
x_{1}+ \cdots +x_{8}\in 2{\Z} \: \}.  
\end{equation}
Here we take the standard (negative) quadratic form on ${\Q}^{8}$. 
The $(-2)$-vectors in $E_{8}$ are as follows. 
For $j\ne k$ we define  
$\delta_{\pm j, \pm k}=(x_{i})$ by 
$x_{j}=\pm 1$, $x_{k}=\pm 1$ and $x_{i}=0$ for $i\ne j, k$. 
For a subset $S$ of $\{ 1, \cdots, 8 \}$ consisting of even elements, 
we define 
$\delta'_{S}=(x_{i})$ by  
$x_{i}=1/2$ if $i\in S$ and $x_{i}=-1/2$ if $i\not\in S$. 
These are the $240$ roots of $E_{8}$. 
 
We will also use the Gritsenko lifting \cite{Gr}. 
Assume that $L$ is even and contains $2U$. 
We shall specialize to the case $b=20$ for later use. 
For an odd number $k$, 
let $M_{k}(\rho_{L})$ be the space of modular forms for ${\rm SL}_{2}({\Z})$ 
of weight $k$ with values in the Weil representation $\rho_{L}$ on ${\C}A_{L}$. 
The Gritsenko lifting with $b=20$ is an injective, ${\rm O}^{+}(L)$-equivariant map 
\begin{equation*}
 M_{k}(\rho_{L}) \hookrightarrow M_{k+9}({\Ost}). 
\end{equation*}
The dimension of $M_{k}(\rho_{L})$ for $k>2$ can be explicitly computed 
by using the formula in \cite{Br}. 
A similar formula for the ${\rm O}(A_{L})$-invariant part 
$M_{k}(\rho_{L})^{{\rm O}(A_{L})}$ 
is given in \cite{Ma2}.


\section{Cusp forms and canonical forms}\label{ssec:cusp-canonical} 

In this section we establish, in a general setting, 
a correspondence between 
canonical forms on $n$-fold universal family of holomorphic symplectic varieties 
and modular forms on the period domain. 
This is the basis of this paper. 
As a consequence we deduce in Corollary \ref{cor:stabilize} the latter half of Theorem \ref{thm:main}. 
The first half of Theorem \ref{thm:main} will be proved case-by-case in later sections. 

Let $M$ be a hyperbolic lattice and 
$L$ be a lattice of signature $(2, b)$. 
We say that a smooth algebraic family 
$\pi:\mathcal{F}\to \mathcal{M}$ 
of holomorphic symplectic manifolds is 
\textit{$M$-polarized with polarized Beauville lattice $L$} 
if $R^{2}\pi_{\ast}{\Z}$ 
contains a sub local system $\Lambda_{pol}$ in its $(1, 1)$-part 
whose fiber is isometric to $M$ 
with the orthogonal complement isometric to $L$. 
Let $\Lambda_{per}=(\Lambda_{pol})^{\perp}\cap R^{2}\pi_{\ast}{\Z}$ 
and we choose an isometry $(\Lambda_{per})_{x_{0}}\simeq L$ at some base point $x_{0}\in \mathcal{M}$. 
If a finite-index subgroup $\Gamma$ of ${\OL}$ 
contains the monodromy group of $\Lambda_{per}$, 
we can define the period map 
\begin{equation*}
\mathcal{P} : 
\mathcal{M} \to \Gamma \backslash {\DL}, \qquad 
x \mapsto [H^{2,0}(\mathcal{F}_{x}) \subset (\Lambda_{per})_{x}\otimes {\C}]. 
\end{equation*}
By Borel's extension theorem, $\mathcal{P}$ is a morphism of algebraic varieties. 
Our interest will be in the case ${\rm rk}(M)=1$, 
but the proof of the following theorem works in the general lattice-polarized setting as well. 

\begin{theorem}\label{thm:cusp-canonical}
Let $L$ be a lattice of signature $(2, b)$ 
and $\Gamma$ be a finite-index subgroup of ${\rm O}^{+}(L)$. 
Let $\mathcal{F}\to \mathcal{M}$ 
be a smooth algebraic family of 
lattice-polarized holomorphic symplectic manifolds of dimension $2d$ 
with polarized Beauville lattice $L$  
whose monodromy group is contained in $\Gamma$. 
Assume that the period map 
$\mathcal{P}:\mathcal{M}\to {\GD}$ 
is dominant and generically finite. 
If  
${\Fn}=\mathcal{F}\times_{\mathcal{M}}\cdots \times_{\mathcal{M}}\mathcal{F}$ 
($n$ times) and 
${\Fncpt}$ is a smooth projective model of ${\Fn}$,  
we have a natural injective map  
\begin{equation}\label{eqn:general correspondence}
S_{b+dn}(\Gamma, \det) \hookrightarrow H^{0}(K_{{\Fncpt}}) 
\end{equation}
which makes the following diagram commutative:  
\begin{equation}\label{eqn:CD canonical map}
\xymatrix{
{\Fncpt} \ar@{-->}[r]^{\phi_{K}} \ar@{-->}[d] & 
|K_{{\Fncpt}}|^{\vee}  \ar@{-->}[d]^{\eqref{eqn:general correspondence}^{\vee}} \\ 
{\GD} \ar@{-->}[r]_{\phi} & {\proj}S_{b+dn}(\Gamma, \det)^{\vee}.  
}
\end{equation}
Here 
$\phi_{K}$ is the canonical map of ${\Fncpt}$ and 
$\phi$ is the rational map defined by the sections in $S_{b+dn}(\Gamma, \det)$. 
Furthermore, if the period map $\mathcal{P}$ is birational and $\Gamma$ does not contain $-{\rm id}$, 
\eqref{eqn:general correspondence} is an isomorphism. 
\end{theorem}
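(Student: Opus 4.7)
The plan is to compute $H^{0}(K_{{\Fncpt}})$ by trivializing the relative canonical bundle with the symplectic form, then identifying the remaining sheaves with automorphic ones via the period map. First, since each fiber of $\pi:\mathcal{F}\to\mathcal{M}$ is a holomorphic symplectic manifold of dimension $2d$, the $d$-th power of the symplectic form gives a canonical isomorphism $K_{\mathcal{F}/\mathcal{M}}\cong \pi^{\ast}\lambda^{\otimes d}$, where $\lambda$ is the Hodge line bundle on $\mathcal{M}$ whose fiber at $x$ is $H^{2,0}(\mathcal{F}_{x})$. Passing to the $n$-fold fiber product gives $K_{{\Fn}/\mathcal{M}}\cong \pi_{n}^{\ast}\lambda^{\otimes dn}$, so adjunction together with $\pi_{n\ast}\mathcal{O}_{{\Fn}}=\mathcal{O}_{\mathcal{M}}$ (K\"unneth on the fibers) yields
\[
H^{0}({\Fn},K_{{\Fn}}) \;\cong\; H^{0}(\mathcal{M},\lambda^{\otimes dn}\otimes K_{\mathcal{M}}).
\]

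Next I would use the period map to interpret both factors on the right as automorphic. By construction $\lambda\cong \mathcal{P}^{\ast}\mathcal{L}$, and a short bundle computation (writing $\mathcal{D}$ as an open piece of the quadric in ${\proj}L_{\C}$ and using that the Gram form trivializes $\det L_{\C}$) gives $K_{\mathcal{D}}\cong \mathcal{L}^{\otimes b}$, with the $\Gamma$-action twisted by $\det$, so that on the smooth orbifold locus $K_{\GD}\cong \mathcal{L}^{\otimes b}\otimes \det$. A modular form $F\in M_{b+dn}(\Gamma,\det)$ is thus a section of $\mathcal{L}^{\otimes dn}\otimes K_{\GD}$, whose pullback by $\mathcal{P}$, composed with the natural map $\mathcal{P}^{\ast}K_{\GD}\to K_{\mathcal{M}}$ (injective because $\mathcal{P}$ is generically \'etale), produces a section of $\lambda^{\otimes dn}\otimes K_{\mathcal{M}}$, hence of $K_{{\Fn}}$. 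Injectivity is automatic from the dominance of $\mathcal{P}$, and the diagram \eqref{eqn:CD canonical map} commutes by construction, since both maps on sections are, up to the identifications above, pullback along $\mathcal{P}\circ\pi_{n}$.

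The main obstacle is to prove that the cusp condition is exactly what allows the section to extend across the boundary of a smooth projective model ${\Fncpt}$ --- both the toroidal boundary lying over the cusps of $\GD$ and any exceptional divisors introduced when resolving singularities. My plan is to use an $L^{2}$ criterion in both directions. The Beauville--Fujiki relation \eqref{eqn:Beauville paring 2-form} identifies, up to a nonvanishing factor, the fiberwise $L^{2}$ norm of a holomorphic top form with the Petersson norm of the corresponding $(2,0)$-class, so the Petersson density $(\Phi,\Phi)_{b+dn,\det}\,\mathrm{vol}$ on $\GD$ matches the canonical $L^{2}$ density on $K_{{\Fn}}$ up to a smooth factor from the fiber volumes. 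By the pole-order computation in the proof of Lemma \ref{lem:Petersson cusp}, a holomorphic top form on ${\Fn}$ extends over ${\Fncpt}$ iff it is $L^{2}$-integrable, and the same local estimate applied on the period side is the condition that $\Phi$ be a cusp form.

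Finally, for the isomorphism when $\mathcal{P}$ is birational and $-\mathrm{id}\notin\Gamma$, the construction runs backwards: a global section of $K_{{\Fncpt}}$ descends via $\pi_{n\ast}$ to a section of $\lambda^{\otimes dn}\otimes K_{\mathcal{M}}$ on $\mathcal{M}$, which under the birational $\mathcal{P}$ transports to a holomorphic section of $\mathcal{L}^{\otimes(b+dn)}\otimes\det$ over the interior of $\GD$, and the $L^{2}$ criterion certifies it as a cusp form. The hypothesis $-\mathrm{id}\notin\Gamma$ is needed precisely to prevent \eqref{eqn:-id effect} from forcing $M_{b+dn}(\Gamma,\det)$ to vanish on parity grounds while $H^{0}(K_{{\Fncpt}})$ remains nonzero, which would otherwise break the surjectivity.
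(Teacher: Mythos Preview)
Your proposal is correct and follows essentially the same strategy as the paper: trivialize the relative canonical bundle by the symplectic form to reduce to the Hodge bundle, identify the latter with the automorphic line bundle via the period map, and then use the $L^{2}$/Petersson criterion (exactly Lemma~\ref{lem:Petersson cusp} together with the Beauville--Fujiki relation \eqref{eqn:Beauville paring 2-form}) to match extendability over $\bar{\mathcal{F}}_{n}$ with the cusp condition.

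The one notable difference is in bookkeeping. You work directly on $\mathcal{M}$ with the Hodge bundle $\lambda$ and write $\lambda\cong\mathcal{P}^{\ast}\mathcal{L}$, $K_{\Gamma\backslash\mathcal{D}}\cong\mathcal{L}^{\otimes b}\otimes\det$; strictly speaking $\mathcal{L}$ and $\det$ descend to $\Gamma\backslash\mathcal{D}$ only over the free locus, so your identifications are valid after the same shrinking that the paper also performs. The paper instead passes to the universal cover $\tilde{\mathcal{M}}\to\mathcal{M}$, lifts the period map to $\mathcal{D}$, and carries the $\pi_{1}(\mathcal{M})$-equivariance through explicitly before taking invariants. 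This buys a cleaner treatment of the final isomorphism statement: once $-\mathrm{id}\notin\Gamma$ forces $\Gamma$ to act effectively on $\mathcal{D}$, birationality of $\mathcal{P}$ makes the monodromy group equal to $\Gamma$, so the $\Gamma$-invariants are \emph{exactly} the descended sections and \eqref{eqn:correspondence interior} is already an isomorphism. Your explanation of the $-\mathrm{id}$ hypothesis via the parity obstruction \eqref{eqn:-id effect} is a correct symptom of this, but the structural point is that without effectiveness the monodromy could be an index-two subgroup $\Gamma_{0}$ with $\Gamma=\langle\Gamma_{0},-\mathrm{id}\rangle$, in which case $H^{0}(K_{\bar{\mathcal{F}}_{n}})$ is governed by $S_{b+dn}(\Gamma_{0},\det)$ rather than $S_{b+dn}(\Gamma,\det)$; the parity vanishing is precisely the case where these differ.
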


\begin{proof}
Let $\mathcal{M}'=\mathcal{P}(\mathcal{M})\subset{\GD}$ 
and $\mathcal{D}'\subset \mathcal{D}$ be the inverse image of $\mathcal{M}'$.   
Shrinking $\mathcal{M}$ as necessary, 
we may assume that 
both $\mathcal{M}\to \mathcal{M}'$ and $\mathcal{D}'\to \mathcal{M}'$ are unramified. 
We take the universal cover 
$\tilde{\mathcal{M}}\to \mathcal{M}$ of $\mathcal{M}$ 
and pullback the family:  
write 
$\tilde{\mathcal{F}}=\mathcal{F}\times_{\mathcal{M}}\tilde{\mathcal{M}}$  
with the projection 
$\pi\colon \tilde{\mathcal{F}} \to \tilde{\mathcal{M}}$. 
We obtain a lift 
$\tilde{\mathcal{P}} : \tilde{\mathcal{M}} \to \mathcal{D}'\subset \mathcal{D}$ 
of the period map $\mathcal{P}$ 
which is equivariant with respect to the monodromy representation 
$\pi_{1}(\mathcal{M})\to \Gamma$. 
Since $\mathcal{P}$ is unramified, 
$\tilde{\mathcal{P}}$ is unramified too.  
We first construct an injective map 
\begin{equation}\label{eqn:correspondence interior}
H^{0}(\mathcal{D}', \mathcal{L}^{\otimes b+dn}\otimes \det)^{\Gamma} 
\hookrightarrow  
H^{0}({\Fn}, K_{{\Fn}}),  
\end{equation}
where $H^{0}({\Fn}, K_{{\Fn}})$ means the space of 
holomorphic (rather than regular) canonical forms on ${\Fn}$. 

We have a natural ${\rm O}^{+}(L_{{\R}})$-equivariant isomorphism 
$K_{\mathcal{D}} \simeq \mathcal{L}^{\otimes b}\otimes \det$ 
of line bundles over $\mathcal{D}$ (see, e.g., \cite{GHS}, \cite{Ma}), 
and hence a $\pi_{1}(\mathcal{M})$-equivariant isomorphism 
\begin{equation}\label{eqn:KC}
K_{\tilde{\mathcal{M}}} 
\simeq \tilde{\mathcal{P}}^{\ast}K_{\mathcal{D}} 
\simeq \tilde{\mathcal{P}}^{\ast}(\mathcal{L}^{\otimes b}\otimes \det)  
\end{equation}
over $\tilde{\mathcal{M}}$. 
Here $\pi_{1}(\mathcal{M})$ acts on 
$\tilde{\mathcal{P}}^{\ast}\mathcal{L}$, $\tilde{\mathcal{P}}^{\ast}\det$ 
through the $\Gamma$-action on $\mathcal{L}$, $\det$ 
and the monodromy representation $\pi_{1}(\mathcal{M})\to \Gamma$. 

On the other hand, 
by the definition of the period map, 
we have a canonical isomorphism 
$\pi_{\ast}\Omega_{\pi}^{2} \simeq \tilde{\mathcal{P}}^{\ast}\mathcal{L}$ 
sending a symplectic form to its cohomology class. 
Since $\pi\colon \tilde{\mathcal{F}}\to \tilde{\mathcal{M}}$ 
is a family of holomorphic symplectic manifolds, 
both $\pi_{\ast}\Omega_{\pi}^{2}$ and 
$\pi_{\ast}K_{\pi}$ are invertible sheaves, 
and the homomorphism 
$(\pi_{\ast}\Omega_{\pi}^{2})^{\otimes d} \to \pi_{\ast}K_{\pi}$ 
defined by the wedge product is isomorphic. 
Therefore we have a natural isomorphism  
$\pi_{\ast}K_{\pi} \simeq \tilde{\mathcal{P}}^{\ast}\mathcal{L}^{\otimes d}$. 
Since the natural homomorphism 
$\pi^{\ast}\pi_{\ast}K_{\pi}\to K_{\pi}$ is isomorphic, 
we find that 
$K_{\pi} \simeq \pi^{\ast}\tilde{\mathcal{P}}^{\ast}\mathcal{L}^{\otimes d}$. 
By construction this is $\pi_{1}(\mathcal{M})$-equivariant. 
If we write 
$\tilde{\mathcal{F}}_{n}=\mathcal{F}_{n}\times_{\mathcal{M}}\tilde{\mathcal{M}}$ 
with the projection 
$\pi_{n}\colon \tilde{\mathcal{F}}_{n} \to \tilde{\mathcal{M}}$, 
this shows that 
\begin{equation}\label{eqn:isom Kpin univ cover}
K_{\pi_{n}} \simeq \pi_{n}^{\ast}\tilde{\mathcal{P}}^{\ast} \mathcal{L}^{\otimes dn} 
\end{equation} 
as $\pi_{1}(\mathcal{M})$-linearized line bundles on $\tilde{\mathcal{F}}_{n}$. 
Combining \eqref{eqn:KC} and \eqref{eqn:isom Kpin univ cover}, 
we obtain a $\pi_{1}(\mathcal{M})$-equivariant isomorphism 
\begin{equation*}
K_{\tilde{\mathcal{F}}_{n}} \simeq 
\pi_{n}^{\ast}\tilde{\mathcal{P}}^{\ast}(\mathcal{L}^{\otimes b+dn}\otimes \det) 
\end{equation*}
over $\tilde{\mathcal{F}}_{n}$. 
Hence pullback of 
sections of $\mathcal{L}^{\otimes b+dn}\otimes \det$ over $\mathcal{D}'$ 
by $\tilde{\mathcal{P}}\circ \pi_{n}$ defines 
a $\pi_{1}(\mathcal{M})$-equivariant injective map 
\begin{equation}\label{eqn:isom univ cover}
H^{0}(\mathcal{D}', \mathcal{L}^{\otimes b+dn}\otimes \det) 
\hookrightarrow 
H^{0}(\tilde{\mathcal{F}}_{n}, K_{\tilde{\mathcal{F}}_{n}}).  
\end{equation}
Taking the invariant parts by $\Gamma$ and $\pi_{1}(\mathcal{M})$ respectively, 
we obtain \eqref{eqn:correspondence interior}. 


Next we prove that restriction of \eqref{eqn:correspondence interior} 
gives the desired map \eqref{eqn:general correspondence}. 
Let $\Phi$ be a $\Gamma$-invariant section of 
$\mathcal{L}^{\otimes b+dn}\otimes \det$ over $\mathcal{D}'$ and 
$\omega \in H^{0}(K_{{\Fn}})$ be the image of $\Phi$ by \eqref{eqn:correspondence interior}. 
We shall show that 
\begin{equation*}
\int_{{\Fn}}\omega\wedge\bar{\omega} = 
C \int_{\mathcal{M}'}(\Phi, \Phi)_{b+dn, \det} {\rm vol}
\end{equation*}
for some constant $C$. 
Our assertion then follows from Lemma \ref{lem:Petersson cusp} 
and the standard fact that 
$\omega$ extends over a smooth projective model of ${\Fn}$ 
if and only if  
$\int_{{\Fn}}\omega\wedge\bar{\omega}<\infty$. 

Since the problem is local, 
it suffices to take an arbitrary small open set 
$U\subset \tilde{\mathcal{M}}$ and prove 
\begin{equation}\label{eqn:Petersson=L2}
\int_{\pi_{n}^{-1}(U)}\omega\wedge\bar{\omega} = 
C \int_{\tilde{\mathcal{P}}(U)}(\Phi, \Phi)_{b+dn, \det}{\rm vol} 
\end{equation}
for some constant $C$ independent of $U$. 
In what follows, $C$ stands for any unspecified such a constant. 
Since $U$ is small, we may decompose $\Phi$ as 
$\Phi=\Phi_{1}\otimes \Phi_{2}^{\otimes dn}$ 
with $\Phi_{1}$ a local section of $\mathcal{L}^{\otimes b}\otimes \det$ 
and $\Phi_{2}$ a local section of $\mathcal{L}$. 
Let $\omega_{1}$ be the canonical form on 
$U\simeq \tilde{\mathcal{P}}(U)$ corresponding to $\Phi_{1}$, and 
$\omega_{2}$ be the relative symplectic form on 
$\tilde{\mathcal{F}}|_{U} \to U$ corresponding to $\tilde{\mathcal{P}}^{\ast}\Phi_{2}$. 
On the one hand, we have 
\begin{equation}\label{eqn:Petersson=L2 base}
\omega_{1}\wedge \bar{\omega}_{1} = 
C (\Phi_{1}, \Phi_{1})_{b,\det}{\rm vol} 
\end{equation}
(see, e.g., \cite{Ma} \S 3.1). 
On the other hand, at each fiber $X$ of $\tilde{\mathcal{F}}|_{U}$, 
the pointwise Petersson norm $(\Phi_{2}, \Phi_{2})_{1}=(\Phi_{2}, \bar{\Phi}_{2})$ 
is nothing but the pairing $q_{X}(\omega_{2}, \bar{\omega}_{2})$ in the Beauville form of $X$. 
Since 
\begin{equation*}
q_{X}(\omega_{2}, \bar{\omega}_{2})^{d} = 
C \int_{X}(\omega_{2} \wedge \bar{\omega}_{2})^{d}  
\end{equation*}
by \eqref{eqn:Beauville paring 2-form}, 
we find that 
\begin{equation}\label{eqn:Petersson=L2 fiber}
(\Phi_{2}^{\otimes dn}, \Phi_{2}^{\otimes dn})_{dn} = 
C \int_{X^{n}} (p_{1}^{\ast}\omega_{2}\wedge \cdots \wedge p_{n}^{\ast}\omega_{2})^{d} \wedge 
(p_{1}^{\ast}\bar{\omega}_{2} \wedge  \cdots \wedge p_{n}^{\ast}\bar{\omega}_{2})^{d}, 
\end{equation}
where $p_{i}\colon X^{n}\to X$ is the $i$-th projection. 
Since $(p_{1}^{\ast}\omega_{2}\wedge \cdots \wedge p_{n}^{\ast}\omega_{2})^{d}$ 
is the canonical form on $X^{n}$ corresponding to
the value of $\Phi_{2}^{\otimes dn}$ at $[X]\in U$, 
the equalities \eqref{eqn:Petersson=L2 base} and \eqref{eqn:Petersson=L2 fiber} 
imply \eqref{eqn:Petersson=L2}. 
Thus we obtain the map \eqref{eqn:general correspondence}. 
Since this map is defined by pullback of sections of line bundle, 
the diagram \eqref{eqn:CD canonical map} is commutative. 

Finally, when $\mathcal{P}$ is birational, 
we may assume as before that it is an open immersion. 
If $\Gamma$ does not contain $-{\rm id}$, 
$\Gamma$ acts on $\mathcal{D}$ effectively, 
and the monodromy group coincides with $\Gamma$. 
We can kill the monodromy by pulling back the family $\mathcal{F}\to \mathcal{M}$ 
to $\mathcal{D}'$ instead of to $\tilde{\mathcal{M}}$. 
Rewriting $\tilde{\mathcal{F}}_{n}={\Fn}\times_{\mathcal{M}}\mathcal{D}'$, 
this shows that \eqref{eqn:isom univ cover} is isomorphic. 
Taking the $\Gamma$-invariant part, 
we see that \eqref{eqn:correspondence interior} is isomorphic. 
Finally, taking the subspace of finite norm, 
we see that \eqref{eqn:general correspondence} is isomorphic. 
This completes the proof of Theorem \ref{thm:cusp-canonical}.  
\end{proof}

\begin{remark}
The last statement of Theorem \ref{thm:cusp-canonical} can also be proved more directly 
by using descends of the $\Gamma$-linearized line bundles $\mathcal{L}$, $\det$ 
to line bundles on $\mathcal{M}\subset {\GD}$. 
\end{remark}

\begin{corollary}\label{cor:stabilize} 
If $n$ is sufficiently large, then $\kappa({\Fn})=b$. 
\end{corollary}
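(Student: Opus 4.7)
\textbf{Plan for Corollary \ref{cor:stabilize}.}

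\emph{Upper bound.} Apply Iitaka's easy addition to the morphism $\Fn \to \mathcal{M}$. The general fiber, being a product of holomorphic symplectic manifolds, has trivial canonical bundle and Kodaira dimension $0$, while $\dim \mathcal{M} = \dim \DL = b$ since $\mathcal{P}$ is dominant and generically finite; hence $\kappa(\Fn) \leq b$. This inequality is also recorded in the introduction, citing \cite{Ii}.

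\emph{Lower bound via Theorem \ref{thm:cusp-canonical}.} The commutative diagram \eqref{eqn:CD canonical map} shows that the canonical image $\phi_K(\Fncpt)$ surjects onto $\phi(\GD)$ under the linear projection dual to \eqref{eqn:general correspondence}, because $\mathcal{P}$ is dominant. Therefore
\[
\kappa(\Fn) \;\geq\; \dim \phi_K(\Fncpt) \;\geq\; \dim \phi(\GD).
\]
By the monotonicity of $\kappa(\Fn)$ in $n$ (Kawamata \cite{Ka}, recalled in the introduction), it is enough to exhibit a single $n_0$ with $\dim \phi(\GD) = b$, equivalently, a single $n_0$ such that $S_{b+dn_0}(\Gamma,\det)$ defines a generically finite rational map on $\GD$.

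\emph{Existence of $n_0$ and the main obstacle.} For $k$ sufficiently large (with $k \equiv b \pmod 2$ when $-\mathrm{id} \in \Gamma$, automatic when $d$ is even), I claim that $S_k(\Gamma, \det)$ generically separates points of $\GD$. The reasoning combines three standard inputs: (i) the Baily-Borel theorem, which identifies the Baily-Borel compactification of $\GD$ with the Proj of the graded ring of modular forms, so that for $k$ large $M_k(\Gamma,\det)$ gives a birational embedding into projective space; (ii) Hirzebruch-Mumford proportionality, giving $\dim M_k(\Gamma,\det) \sim c\, k^b$ with $c > 0$ through the admissible parity class; and (iii) the fact that the Baily-Borel boundary has dimension strictly less than $b$, so $\dim(M_k/S_k) = O(k^{b-1})$. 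A dimension-count argument then confirms that the codimension-asymptotically-negligible subspace $S_k$ still generically separates points. Making this separation step fully rigorous for cusp forms with character $\det$ is the main obstacle. A clean route is an orthogonal-group version of the Freitag-Tai theorem on cusp-form separation; alternatively, for the six families in Theorem \ref{thm:main} one can produce enough independent high-weight cusp forms by hand, using the Borcherds quasi-pullbacks of $\Phi_{12}$ multiplied by Gritsenko lifts as developed in Sections \ref{sec:BD}--\ref{sec:EPW}, and then propagate to all large $n$ by Kawamata monotonicity. Any parity constraint on $n_0$ inherited from $\det$ is automatic when $d$ is even and otherwise restricts $n_0$ to a single residue class.
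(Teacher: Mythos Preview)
Your overall strategy matches the paper's: the upper bound via Iitaka's addition formula, the lower bound via the commutative diagram \eqref{eqn:CD canonical map}, and the reduction to finding a single $n_0$ for which $S_{b+dn_0}(\Gamma,\det)$ defines a generically finite map, followed by Kawamata monotonicity. Where you diverge is in how to produce that $n_0$, and here you have correctly named---but not closed---the gap.

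You attempt to show directly that $S_k(\Gamma,\det)$ separates points for large $k$ by a dimension count (Hirzebruch--Mumford proportionality plus the boundary bound $\dim(M_k/S_k)=O(k^{b-1})$). This asymptotic is true, but it does not by itself yield separation: a subspace of asymptotically full codimension can still have base locus or fail to separate generic points, and you would need an additional argument (e.g.\ of Freitag--Tai type) to conclude. You flag this yourself as ``the main obstacle,'' and neither of your suggested workarounds is carried out.

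The paper sidesteps this entirely with a one-line multiplication trick. Rather than proving separation for cusp forms with character $\det$, it chooses any weight $k_0$ with $S_{k_0}(\Gamma,\det)\ne 0$, and then any sufficiently large $k_1\equiv b-k_0 \pmod d$ such that the \emph{trivial-character} forms $M_{k_1}(\Gamma)$ already define a generically finite map (this is immediate from Baily--Borel). Since
\[
S_{k_0}(\Gamma,\det)\cdot M_{k_1}(\Gamma)\ \subset\ S_{k_0+k_1}(\Gamma,\det),
\]
the space $S_{k_0+k_1}(\Gamma,\det)$ inherits a generically finite map, giving $n_0=(k_0+k_1-b)/d$. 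The parity issue when $-{\rm id}\in\Gamma$ is handled because then necessarily $k_0\equiv b\pmod 2$, so the arithmetic progression $b-k_0+d\mathbb{Z}$ contains large even $k_1$. This replaces your unfinished separation argument for cusp forms by the classical Baily--Borel statement for trivial-character modular forms, at the cost only of exhibiting a single nonzero cusp form with character $\det$.
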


\begin{proof}
Since ${\Fn}\to\mathcal{F}_{n-1}$ is 
a smooth family of holomorphic symplectic varieties, 
$\kappa({\Fn})$ is nondecreasing with respect to $n$ 
by Iitaka's subadditivity conjecture known in this case \cite{Ka}. 
We also have the bound 
$\kappa({\Fn})\leq \dim \mathcal{M} = b$ 
by Iitaka's addition formula \cite{Ii}. 

We take a weight $k_{0}$ such that 
$S_{k_{0}}(\Gamma, \det)\ne \{ 0 \}$. 
Then we take a weight $k_{1}$ such that $k_{1}\equiv b-k_{0}$ mod $d$ and that 
${\GD}\dashrightarrow {\proj}M_{k_{1}}(\Gamma)^{\vee}$ 
is generically finite onto its image. 
(When $\Gamma$ contains $-{\rm id}$, we must have $k_{0}\equiv b$ mod $2$, 
so $b-k_{0}+d{\Z}$ contains sufficiently large even $k_{1}$.) 
Since 
\begin{equation*}
S_{k_{0}}(\Gamma, \det)\cdot M_{k_{1}}(\Gamma) \subset S_{k_{0}+k_{1}}(\Gamma, \det), 
\end{equation*}
Theorem \ref{thm:cusp-canonical} implies that 
for $n_{0}=(k_{0}+k_{1}-b)/d$, 
the image of the canonical map of $\bar{\mathcal{F}}_{n_{0}}$ has dimension $\geq b$. 
Hence $\kappa(\mathcal{F}_{n_{0}})\geq b$ 
and so   
$\kappa({\Fn})=b$ for all $n\geq n_{0}$. 
\end{proof}

This proves the latter half of Theorem \ref{thm:main}. 
In the following sections, we apply Theorem \ref{thm:cusp-canonical} 
to the six explicit families in Theorem \ref{thm:main}. 
In practice, one needs to identify the group $\Gamma$. 
For example, according to \cite{Mark} Remark 8.5 and \cite{GHS} Remark 3.15, 
the monodromy group of a family of 
polarized symplectic manifolds of $K3^{[2]}$ type with polarization vector $h$ 
is contained in $\tilde{{\rm O}}^{+}(h^{\perp}\cap L_{2})$.


\section{Fano varieties of cubic fourfolds}\label{sec:BD}

In this section we prove Theorem \ref{thm:main} 
for the case of Fano varieties of cubic fourfolds \cite{BD}. 
Let $Y\subset {\proj}^{5}$ be a smooth cubic fourfold. 
The Fano variety $F(Y)\subset {\G}$ of $Y$ is the variety parametrizing lines on $Y$, 
which is smooth of dimension $4$. 
Beauville-Donagi \cite{BD} proved that 
$F(Y)$ is a holomorphic symplectic manifold of $K3^{[2]}$ type 
polarized by the Pl\"ucker,  
and its polarized Beauville lattice is isometric to $L_{cub}=2U\oplus 2E_{8}\oplus A_{2}$. 
In fact, the polarized Beauville lattice of $F(Y)$ 
is isomorphic to the primitive part of $H^{4}(Y, {\Z})$ as polarized Hodge structures, 
where the intersection form on $H^{4}(Y, {\Z})$ is $(-1)$-scaled. 

Let $U\subset |\mathcal{O}_{{\proj}^{5}}(3)|$ 
be the parameter space of smooth cubic fourfolds. 
By GIT (\cite{GIT}), the geometric quotient $U/{\rm PGL}_{6}$ 
exists as an affine variety of dimension $20$. 
Let $\Gamma=\tilde{{\rm O}}^{+}(L_{cub})$. 
The period map 
$U/{\rm PGL}_{6} \to {\GD}$ 
is an open immersion by Voisin \cite{Vo}, 
and the complement of its image was determined by 
Looijenga \cite{Lo} and Laza \cite{La}.  

\begin{lemma}[cf.~\cite{La}]\label{lem:cusp form BD}
The cusp form $\Phi_{12}|_{L_{cub}}$ has weight $48$. 
Moreover, $S_{66}(\Gamma, \det)$ and $S_{68}(\Gamma, \det)$ 
have dimension $\geq 2$. 
\end{lemma}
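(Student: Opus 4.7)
The plan is to identify the orthogonal complement of $L_{cub}$ inside the Borcherds lattice $II_{2,26}$ in order to compute the weight of the quasi-pullback, and then to enlarge $S_{\bullet}(\Gamma,\det)$ by multiplying with Gritsenko lifts of suitable weight.

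First I would embed $L_{cub}=2U\oplus 2E_{8}\oplus A_{2}$ primitively into $II_{2,26}=2U\oplus 3E_{8}$ by sending the $A_{2}$ summand to the sublattice generated by two adjacent simple roots of the third $E_{8}$. Since $E_{8}$ is unimodular and $A_{2}\oplus E_{6}\subset E_{8}$ is a maximal root sublattice of index $3$, the orthogonal complement of $A_{2}$ in $E_{8}$ is the root lattice $E_{6}$, so $N:=L_{cub}^{\perp}\cap II_{2,26}\simeq E_{6}$. Because $E_{6}$ contains $72$ roots we have $r(N)=72$, and the quasi-pullback formula gives the weight $12+72/2=48$. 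The condition $r(N)>0$ automatically forces $\Phi_{12}|_{L_{cub}}$ to be a cusp form by \cite{GHS}.

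To produce at least two linearly independent elements in each of $S_{66}(\Gamma,\det)$ and $S_{68}(\Gamma,\det)$, I would multiply $\Phi_{12}|_{L_{cub}}$ by modular forms in $M_{18}(\Gamma)$ and $M_{20}(\Gamma)$ respectively. The product of a cusp form with a modular form is a cusp form, and multiplication by the nonzero form $\Phi_{12}|_{L_{cub}}$ is injective, so it suffices to exhibit two linearly independent elements in each of $M_{18}(\Gamma)$ and $M_{20}(\Gamma)$. These I would supply via the Gritsenko lifting, which for $b=20$ and odd $k$ embeds $M_{k}(\rho_{L_{cub}})$ into $M_{k+9}(\Gamma)$ with trivial character; taking $k=9$ and $k=11$ reduces the problem to verifying $\dim M_{9}(\rho_{L_{cub}})\geq 2$ and $\dim M_{11}(\rho_{L_{cub}})\geq 2$.

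Since $A_{L_{cub}}\simeq A_{A_{2}}\simeq \Z/3$ with its standard quadratic form, the Weil representation $\rho_{L_{cub}}$ is of rank $3$ and completely explicit. The hard part of the proof, insofar as there is one, is evaluating Bruinier's dimension formula \cite{Br} at weights $9$ and $11$ with this Weil representation; it amounts to feeding in the local invariants at the single relevant prime $p=3$ and summing the contributions from the cusp and the elliptic points of $\mathrm{SL}_{2}(\Z)$. Since both weights lie well above the trivial range $k>2$, one expects the resulting dimensions to be at least $2$ with considerable slack, which closes the proof.
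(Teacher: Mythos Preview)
Your approach is essentially identical to the paper's: embed $A_{2}\hookrightarrow E_{8}$ with complement $E_{6}$ (72 roots) to get weight $48$, then multiply $\Phi_{12}|_{L_{cub}}$ by Gritsenko lifts of $M_{9}(\rho_{L_{cub}})$ and $M_{11}(\rho_{L_{cub}})$. The paper carries out the Bruinier computation you allude to and obtains the exact formula $\dim M_{k}(\rho_{L_{cub}})=\lfloor (k+3)/6\rfloor$, which gives exactly $2$ at $k=9,11$---so there is in fact no slack, but your conclusion stands.
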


\begin{proof}
Write $L=L_{cub}$. 
The weight of $\Phi_{12}|_{L}$ is computed in \cite{La}. 
($A_{2}^{\perp}\simeq E_{6}$ has $72$ roots.) 
We have 
$\dim M_{k}(\rho_{L})=[(k+3)/6]$ 
by computing the formula in \cite{Br}. 
Product of $\Phi_{12}|_{L}$ with the Gritsenko lift of 
$M_{9}(\rho_{L})$ and $M_{11}(\rho_{L})$ 
proves the second assertion. 
\end{proof}

We consider the parameter space of 
smooth cubic fourfolds with $n$ marked lines: 
\begin{equation*}
F_{n} 
 =  
\{ \: (Y, l_{1}, \cdots , l_{n}) \: | \: 
Y\in U, \: l_{1}, \cdots, l_{n} \in F(Y) \: \} \\ 
 \subset  
U\times {\G}^{n}, 
\end{equation*}
and let   
$\mathcal{F}_{n} = F_{n}/{\rm PGL}_{6}$.  
Then $\mathcal{F}_{n}$ is smooth over the open locus of $U/{\rm PGL}_{6}$ 
where cubic fourfolds have no nontrivial stabilizer. 
By Lemma \ref{lem:cusp form BD}, 
with $48=20+2\cdot 14$ and $66=20+2\cdot 23$,  
we see that $\mathcal{F}_{14}$ has positive geometric genus 
and $\kappa(\mathcal{F}_{23})>0$. 
(Cusp forms of weight $68$ will be used in \S \ref{sec:LLSS}.) 
%
%
It remains to prove that $\mathcal{F}_{13}$ is unirational. 
We prove  

\begin{proposition}\label{prop:F5 BD rational}
$F_{13}$ is rational. 
\end{proposition}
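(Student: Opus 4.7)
The plan is to realize $F_{13}$ birationally as a Zariski-locally trivial $\mathbb{P}^{3}$-bundle over $\G^{13}$. Consider the forgetful morphism
$$\pi\colon F_{13}\longrightarrow \G^{13}, \qquad (Y,l_{1},\ldots,l_{13})\mapsto (l_{1},\ldots,l_{13}).$$
One has $\dim F_{13}=55+13\cdot 4=107$ and $\dim \G^{13}=13\cdot 8=104$, so the expected fiber dimension is $3$. The fiber over $(l_{1},\ldots,l_{13})$ is the open locus of smooth cubics inside the linear system $|\mathcal{I}_{Z}(3)|$ with $Z=l_{1}\cup\cdots\cup l_{13}$. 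Since each line imposes at most $4$ linear conditions on the $56$-dimensional space $H^{0}(\mathcal{O}_{\mathbb{P}^{5}}(3))$ (the restriction to $\mathbb{P}^{1}$ is a cubic), we have $\dim|\mathcal{I}_{Z}(3)|\geq 3$, with equality exactly when the $52$ linear conditions imposed by the $13$ lines are independent.

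The essential step is to show that for a general $(l_{1},\ldots,l_{13})$ in the image of $\pi$ the $13$ lines impose $52$ independent conditions, so that $|\mathcal{I}_{Z}(3)|$ has exact projective dimension $3$. Granting this, over a dense open $V\subset \G^{13}$ one obtains a rank-$4$ vector subbundle $E\subset H^{0}(\mathcal{O}_{\mathbb{P}^{5}}(3))\otimes\mathcal{O}_{V}$ with fiber $H^{0}(\mathcal{I}_{Z}(3))$, and $F_{13}$ is birational to an open subset of $\mathbb{P}(E)$. As $\mathbb{P}(E)$ is Zariski-locally trivial over $V$ and $V$ is open in the rational variety $\G^{13}$, we conclude that $\mathbb{P}(E)$, and hence $F_{13}$, is rational. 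The given cubic $Y$ in any starting tuple $(Y,l_{1},\ldots,l_{13})\in F_{13}$ lies in the fiber $|\mathcal{I}_{Z}(3)|$ and is smooth, so by openness of the smooth locus in $\mathbb{P}^{55}$ the generic cubic in the fiber is smooth, and dominance of $\pi$ is forced by the matching of dimensions once the fiber dimension is pinned down.

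The main obstacle is establishing the independence of the $52$ conditions. The direct attack is by specialization: exhibit one explicit $13$-tuple of lines on a smooth cubic where the rank of the evaluation map
$$H^{0}(\mathcal{O}_{\mathbb{P}^{5}}(3))\to \bigoplus_{i=1}^{13} H^{0}(\mathcal{O}_{l_{i}}(3))$$
equals $52$, and then invoke semicontinuity to deduce the same for a dense open of the image of $\pi$. The introduction signals that the author instead establishes the required nondegeneracy indirectly in Claim \ref{lem:BD dominant}, using the transcendental input $\kappa(\mathcal{F}^{BD}_{14})\geq 0$ obtained from the cusp form $\Phi_{12}|_{L_{cub}}$ of Lemma \ref{lem:cusp form BD}: a failure of $\pi$ to have the expected fiber dimension would produce a uniruling of $F_{14}$, and hence of $\mathcal{F}_{14}$, incompatible with the positive geometric genus established via $\Phi_{12}|_{L_{cub}}$. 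After this step, the $\mathbb{P}^{3}$-bundle construction above finishes the proof of rationality.
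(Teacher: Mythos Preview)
Your outline is correct and matches the paper's approach: project $F_{13}\to\G^{13}$, identify the fibers as open subsets of linear systems, and reduce rationality to the dominance of this projection (equivalently, to the generic fiber having the expected dimension~$3$). You also correctly identify that the paper establishes this dominance in Claim~\ref{lem:BD dominant} using the transcendental input $\kappa(\mathcal{F}_{14})\geq 0$.

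However, your one-sentence summary of how Claim~\ref{lem:BD dominant} works is not accurate. You write that a failure of $\pi$ to have the expected fiber dimension would directly produce a uniruling of $F_{14}$. This is not what happens, and indeed the implication does not hold so simply: if $\dim V(l_{1},\ldots,l_{13})\geq 5$, then adding one more line only gives $\dim V(l_{1},\ldots,l_{14})\geq 5-4=1$, which does not force the $\pi'$-fibers of $F_{14}$ to be positive-dimensional. The paper's actual argument is a contradiction proof that uses non-uniruledness of $\mathcal{F}_{14}$ as an \emph{input} (to pin down $\dim V(l_{1},\ldots,l_{14})=1$), combines this with the assumed failure of dominance to deduce that $\pi(F_{13})$ has codimension exactly~$1$ in $\G^{13}$, then invokes the $\mathfrak{S}_{13}$-symmetry of $\pi(F_{13})$ to force $F_{12}\to\G^{12}$ to be dominant, and finally derives a numerical contradiction by computing $\dim V(l_{1},\ldots,l_{12},l_{14})$ in two ways. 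So the transcendental input does not by itself close the argument; there is a genuine combinatorial step afterward that your summary elides.
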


\begin{proof}
Consider the second projection $\pi\colon F_{13}\to {\G}^{13}$. 
If $(l_{1}, \cdots, l_{13})\in \pi(F_{13})$, 
the fiber $\pi^{-1}(l_{1}, \cdots, l_{13})$ is a non-empty open set of 
the linear system of cubics containing $l_{1}, \cdots, l_{13}$, 
which we denote by  
\begin{equation*}
{\proj}V(l_{1},\cdots,l_{13}) = 
{\proj}\ker (H^{0}(\mathcal{O}_{{\proj}^{5}}(3)) \to 
\oplus_{i=1}^{13}H^{0}(\mathcal{O}_{l_{i}}(3))). 
\end{equation*}
This shows that $F_{13}$ is birationally a ${\proj}^{N}$-bundle over $\pi(F_{13})$ with 
\begin{equation*}
N= \dim F_{13} - \dim \pi(F_{13}) \geq \dim F_{13} - \dim {\G}^{13} = 3. 
\end{equation*}
Hence we are reduced to the following assertion.

\begin{claim}\label{lem:BD dominant}
$\pi\colon F_{13} \to {\G}^{13}$ is dominant. 
\end{claim}

%
Assume to the contrary that $\pi$ was not dominant. 
Then we have 
$\dim V(l_{1},\cdots,l_{13})\geq 5$ 
for a general point $(l_{1}, \cdots, l_{13})$ of $\pi(F_{13})$. 
Consider the similar projection 
$\pi'\colon F_{14}\to {\G}^{14}$ in $n=14$. 
Since $\mathcal{F}_{14}=F_{14}/{\rm PGL}_{6}$ cannot be uniruled as just proved, 
we must have  
$\dim V(l_{1}, \cdots, l_{14})=1$ 
for general $(l_{1}, \cdots, l_{14})\in \pi'(F_{14})$. 
On the other hand, $V(l_{1}, \cdots, l_{14})$ can be written as 
\begin{equation*}
V(l_{1},\cdots,l_{14}) = 
\ker (V(l_{1},\cdots,l_{13}) \stackrel{\rho}{\to} H^{0}(\mathcal{O}_{l_{14}}(3))), 
\end{equation*}
where $\rho$ is the restriction map. 
Hence 
for general $(l_{1}, \cdots, l_{13})\in \pi(F_{13})$, 
we have 
$\dim V(l_{1}, \cdots, l_{13})=5$, 
$\rho$ is surjective, 
and $\pi(F_{13})$ is of codimension $1$ in ${\G}^{13}$. 

The last property implies that the similar projection 
$\pi''\colon F_{12}\to {\G}^{12}$ in $n=12$ must be dominant, 
because otherwise $\pi(F_{13})$ would be dense in the inverse image of 
$\pi''(F_{12})\subset {\G}^{12}$ 
by the projection ${\G}^{13}\to {\G}^{12}$, 
which contradicts the $\frak{S}_{13}$-invariance of $\pi(F_{13})$. 
This in turn shows that 
\begin{equation*}
\dim V(l_{1},\cdots,l_{12}) = \dim F_{12} - \dim {\G}^{12} + 1 = 8 
\end{equation*}
for a general point $(l_{1},\cdots,l_{12})$ of ${\G}^{12}$. 
However, since 
$V(l_{1},\cdots,l_{13})\to H^{0}(\mathcal{O}_{l_{14}}(3))$ 
is surjective, 
$V(l_{1},\cdots,l_{12})\to H^{0}(\mathcal{O}_{l_{14}}(3))$ 
is surjective too. 
Hence 
$\dim V(l_{1},\cdots,l_{12},l_{14})=4$. 
But since $(l_{1}, \cdots, l_{12}, l_{14})$ is a general point of $\pi(F_{13})$, 
this is absurd. 
This proves Claim \ref{lem:BD dominant} 
and so finishes the proof of Proposition \ref{prop:F5 BD rational}. 
\end{proof}

\begin{remark}\label{remark: g=2 K3 and cubic4}
In the analogous case of $K3$ surfaces of genus $g$ (\cite{Ma}), 
when $3\leq g \leq 10$, 
the weight of the quasi-pullback $\Phi_{K3,g}$ of $\Phi_{12}$ coincided with 
\begin{equation*}
{\rm weight}(\Phi_{K3,g}) = \dim V_{g} + 19 = \dim V_{g} + \dim({\rm moduli}) 
\end{equation*}
for a representation space $V_{g}$ related to the projective model of the $K3$ surfaces. 
Here, 
for $\Phi_{K3,2}$ and $\Phi_{cubic}=\Phi_{12}|_{L_{cub}}$, 
the ``switched'' equalities 
\begin{eqnarray*}
& & {\rm weight}(\Phi_{K3,2}) -19   =  56  =  h^{0}(\mathcal{O}_{{\proj}^{5}}(3))   \\
& & {\rm weight}(\Phi_{cubic}) - 20 =  28  =  h^{0}(\mathcal{O}_{{\proj}^{2}}(6)) 
\end{eqnarray*}
hold. 
Is this accidental? 
\end{remark}


\section{Debarre-Voisin fourfolds}\label{sec:DV}

In this section we prove Theorem \ref{thm:main} 
for the case of Debarre-Voisin fourfolds \cite{DV}. 
Let $\mathcal{E}$ be the dual of the rank $6$ universal sub vector bundle 
over the Grassmannian $G(6, 10)$. 
The space $H^{0}(\bigwedge^{3}\mathcal{E})$ is naturally isomorphic to $\bigwedge^{3}({\C}^{10})^{\vee}$. 
Debarre-Voisin \cite{DV} proved that the zero locus 
$X_{\sigma}\subset G(6, 10)$ of a general section $\sigma$ of $\bigwedge^{3}\mathcal{E}$ 
is a holomorphic symplectic manifold of $K3^{[2]}$ type, 
and the polarization given by the Pl\"ucker has Beauville norm $22$ and is of non-split type. 
The polarized Beauville lattice is hence isometric to 
\begin{equation*}\label{eqn:LDV} 
L_{DV} = 2U \oplus 2E_{8} \oplus K, \qquad 
K=\begin{pmatrix} -2 & 1 \\ 1 & -6 \end{pmatrix}. 
\end{equation*}
Let $\Gamma=\tilde{{\rm O}}^{+}(L_{DV})$. 

\begin{lemma}\label{lem:cusp form DV}
There exists an embedding $K\hookrightarrow E_{8}$ with $r(K^{\perp})=40$. 
The resulting cusp form $\Phi_{12}|_{L_{DV}}$ has weight $32$. 
Moreover, $S_{46}(\Gamma, \det)$ has dimension $\geq 2$. 
\end{lemma}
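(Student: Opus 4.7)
The plan is to embed the rank-$2$ summand $K$ of $L_{DV}$ into one $E_{8}$ factor of $II_{2,26}=2U\oplus 3E_{8}$, count the $(-2)$-vectors in the orthogonal complement of $K$ inside that $E_{8}$, and then apply the quasi-pullback formula together with the Gritsenko lifting.

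First I would exhibit the embedding $K\hookrightarrow E_{8}$ explicitly in the model \eqref{eqn:E8}. Take $e_{1}=(1,1,0,\ldots,0)\in E_{8}$ of norm $-2$, and a short search for $e_{2}\in E_{8}$ with $(e_{1},e_{2})=1$ and $(e_{2},e_{2})=-6$ produces, for example, $e_{2}=(0,-1,1,1,1,1,1,0)$; then $\langle e_{1},e_{2}\rangle$ has Gram matrix $K$ and a direct check shows it is primitive in $E_{8}$. Next I would enumerate the roots $w\in E_{8}$ orthogonal to both $e_{1}$ and $e_{2}$, namely those satisfying $w_{2}=-w_{1}$ and $w_{1}+w_{3}+w_{4}+w_{5}+w_{6}+w_{7}=0$. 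Sifting the $112$ integral roots $\pm e_{i}\pm e_{j}$ and the $128$ half-integral roots in turn gives exactly $20$ of each type, so $r(K^{\perp})=40$; the quasi-pullback formula then produces $\Phi_{12}|_{L_{DV}}\in S_{32}(\Gamma,\det)$ for $\Gamma=\tilde{O}^{+}(L_{DV})$.

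For the second assertion, the Gritsenko lifting with $b=20$ supplies a $\Gamma$-equivariant injection $M_{5}(\rho_{L_{DV}})\hookrightarrow M_{14}(\Gamma)$, and multiplication by the cusp form $\Phi_{12}|_{L_{DV}}$ embeds this space into $S_{46}(\Gamma,\det)$. It therefore suffices to show $\dim M_{5}(\rho_{L_{DV}})\geq 2$, which I would verify by feeding Bruinier's dimension formula from \cite{Br} the discriminant form on $A_{L_{DV}}\simeq \mathbb{Z}/11$ determined by $K$. The main obstacle is this last step, together with hitting the precise target $r(K^{\perp})=40$ for some concrete embedding (different choices of $e_{2}$ can spoil the weight count); both are finite numerical checks that reduce to unpacking known formulas.
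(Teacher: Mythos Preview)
Your proposal is correct and follows essentially the same strategy as the paper: embed $K$ into one $E_{8}$ factor, count $40$ roots in $K^{\perp}$ to obtain $\Phi_{12}|_{L_{DV}}\in S_{32}(\Gamma,\det)$, and then multiply by the Gritsenko lift of $M_{5}(\rho_{L_{DV}})$, whose dimension is computed via Bruinier's formula. The only difference is cosmetic: the paper uses the embedding $v_{1}\mapsto(1,-1,0,\ldots,0)$, $v_{2}\mapsto(0,1,1,2,0,\ldots,0)$, which yields a $24+16$ split of the orthogonal roots, whereas your choice gives a $20+20$ split; both total $40$, and since $\det K=11$ is squarefree every embedding of $K$ is automatically primitive, so your primitivity check is in fact free.
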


\begin{proof}
Let $v_{1}, v_{2}$ be the basis of $K$ in the above matrix expression. 
We embed $K$ into $E_{8}$, in the model \eqref{eqn:E8} of $E_{8}$, by 
\begin{equation*}
v_{1}\mapsto (1, -1, 0, \cdots, 0), \quad 
v_{2}\mapsto (0, 1, 1, 2, 0, \cdots, 0). 
\end{equation*}
The roots of $E_{8}$ orthogonal to these two vectors are 
$\delta_{\pm i, \pm j}$ with $i, j\geq 5$ and 
$\pm\delta'_{S}$ with $1, 2, 3\in S$ and $4\not\in S$. 
The total number is $24+16=40$. 
Hence the weight of $\Phi_{12}|_{L_{DV}}$ is $12+20=32$. 
Working out the formula in \cite{Br}, 
we also see that $\dim M_{k}(\rho_{L_{DV}})=(k-1)/2$. 
Taking product of $\Phi_{12}|_{L_{DV}}$ with 
the Gritsenko lift of $M_{5}(\rho_{L_{DV}})$, 
we obtain the last assertion. 
\end{proof}
 
Let $U$ be the open locus of ${\proj}(\bigwedge^{3}{\C}^{10})^{\vee}$ 
where $X_{\sigma}$ is smooth of dimension $4$  
and $[\sigma]$ is ${\rm PGL}_{10}$-stable with no nontrivial stabilizer.  
The period map 
$U/{\rm PGL}_{10}\to {\GD}$ 
is generically finite and dominant (\cite{DV}). 
Consider the incidence 
\begin{equation*}
F_{n} = 
\{ \: ([\sigma], p_{1}, \cdots, p_{n}) \in U \times G(6, 10)^{n} \: | \: p_{i}\in X_{\sigma} \: \} 
\subset U \times G(6, 10)^{n} 
\end{equation*}
and let 
$\mathcal{F}_{n}=F_{n}/{\rm PGL}_{10}$. 
By Lemma \ref{lem:cusp form DV}, 
with $32=20+2\cdot 6$ and $46=20+2\cdot 13$, 
we see that $\mathcal{F}_{6}$ has positive geometric genus and $\kappa(\mathcal{F}_{13})>0$. 
It remains to show that $\mathcal{F}_{5}$ is unirational. 
We prove 

\begin{proposition}
$F_{5}$ is rational. 
\end{proposition}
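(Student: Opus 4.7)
The plan is to mimic the proof of Proposition~\ref{prop:F5 BD rational} in the Beauville--Donagi case. Consider the projection $\pi\colon F_{5}\to G(6,10)^{5}$ onto the marked points. For $(p_{1},\ldots,p_{5})\in G(6,10)^{5}$ corresponding to $6$-dimensional subspaces $V_{1},\ldots,V_{5}\subset\mathbb{C}^{10}$, the fiber $\pi^{-1}(p_{1},\ldots,p_{5})$ is a non-empty open subset of $\mathbb{P}V(p_{1},\ldots,p_{5})$, where
\[
V(p_{1},\ldots,p_{5})\;=\;\ker\!\Bigl(\bigwedge\nolimits^{3}(\mathbb{C}^{10})^{\vee}\longrightarrow\bigoplus_{i=1}^{5}\bigwedge\nolimits^{3}V_{i}^{\vee}\Bigr).
\]
Each factor of the target has dimension $\binom{6}{3}=20$, so $\dim V(p_{1},\ldots,p_{5})\geq 120-5\cdot 20=20$, which matches the expected count $\dim F_{5}=119+5\cdot 4=139=\dim G(6,10)^{5}+19$. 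Consequently $F_{5}$ is birationally a projective subbundle of the trivial $\mathbb{P}^{119}$-bundle over $\pi(F_{5})$, and if $\pi$ is dominant then $F_{5}$ is birational to a linear $\mathbb{P}^{19}$-bundle over the rational variety $G(6,10)^{5}$, hence rational.

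\noindent\textbf{Dominance via the BD template.} It therefore suffices to show that $\pi$ is dominant, in analogy with Claim~\ref{lem:BD dominant}. I would argue by contradiction, using the input $\kappa(\mathcal{F}_{6})\geq 0$ provided by Lemma~\ref{lem:cusp form DV} and Theorem~\ref{thm:cusp-canonical}; in particular, $\mathcal{F}_{6}=F_{6}/\mathrm{PGL}_{10}$ is not uniruled. Assume $\pi$ is not dominant, so that $\dim V(p_{1},\ldots,p_{5})\geq 21$ on $\pi(F_{5})$. Since the fibers of $\pi'\colon F_{6}\to G(6,10)^{6}$ are open subsets of projective spaces, non-uniruledness of $\mathcal{F}_{6}$ forces the generic fiber of $\pi'$ to be a single point, i.e., $\dim V(p_{1},\ldots,p_{6})=1$ for generic $(p_{1},\ldots,p_{6})\in\pi'(F_{6})$. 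The identification $V(p_{1},\ldots,p_{6})=\ker(V(p_{1},\ldots,p_{5})\xrightarrow{\rho}\bigwedge^{3}V_{p_{6}}^{\vee})$ with $\dim\bigwedge^{3}V_{p_{6}}^{\vee}=20$ then pins down $\dim V(p_{1},\ldots,p_{5})=21$ on $\pi(F_{5})$, the map $\rho$ surjective, and $\mathrm{codim}\,\pi(F_{5})=1$ in $G(6,10)^{5}$.

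\noindent\textbf{Completing the contradiction.} By $\mathfrak{S}_{5}$-invariance of $\pi(F_{5})$ combined with its codimension $1$, the $4$-point projection $\pi''\colon F_{4}\to G(6,10)^{4}$ must be dominant; this uses the same reasoning as in the BD case (otherwise $\pi(F_{5})$ would be contained in each of the five forgetful preimages of $\pi''(F_{4})$, incompatible with its being a single $\mathfrak{S}_{5}$-invariant codimension-$1$ subvariety). Hence $\dim V(p_{1},\ldots,p_{4})=40$ for generic $(p_{1},\ldots,p_{4})\in G(6,10)^{4}$. Since $V(p_{1},\ldots,p_{5})\subset V(p_{1},\ldots,p_{4})$ and $\rho$ is surjective, the restriction $V(p_{1},\ldots,p_{4})\to\bigwedge^{3}V_{p_{6}}^{\vee}$ is surjective as well, yielding $\dim V(p_{1},\ldots,p_{4},p_{6})=20$ for generic $p_{6}$. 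But $(p_{1},\ldots,p_{4},p_{6})\in\pi(F_{5})$, on which $\dim V$ must be $21$---a contradiction, establishing dominance.

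\noindent\textbf{Main obstacle.} The hardest step is the $\mathfrak{S}_{5}$-invariance/codimension-$1$ reduction to dominance of the $4$-point projection; although this parallels the BD case exactly, the underlying divisorial argument must be re-checked with the new numerical inputs. An alternative and more direct route would be to prove dominance of $\pi$ head-on by exhibiting five $6$-dimensional subspaces $V_{1},\ldots,V_{5}\subset\mathbb{C}^{10}$ for which the map $\bigoplus_{i=1}^{5}\bigwedge^{3}V_{i}\to\bigwedge^{3}\mathbb{C}^{10}$ is injective; this is plausible since for generic $V_{i}$ the pairwise intersections $V_{i}\cap V_{j}$ are $2$-dimensional and hence $\bigwedge^{3}(V_{i}\cap V_{j})=0$, but verifying injectivity of the full sum requires a general-position or explicit linear-algebra check.
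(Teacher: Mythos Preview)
Your proposal is correct and follows essentially the same approach as the paper: the paper's own proof simply says ``by the same argument as Claim~\ref{lem:BD dominant}, the above result $\kappa(\mathcal{F}_{6})\geq 0$ enables us to conclude that $F_{5}\to G(6,10)^{5}$ is dominant,'' and you have carried out precisely that adaptation with the correct numerics ($120$, $20$, $40$, etc.). Your write-up is in fact more detailed than the paper's, which defers entirely to the BD template.
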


\begin{proof}
Consider the second projection 
$\pi\colon F_{n}\to G(6, 10)^{n}$. 
The fiber $\pi^{-1}(p_{1}, \cdots, p_{n})$ over 
$(p_{1}, \cdots, p_{n})\in \pi(F_{n})$ 
is a non-empty open set of 
the linear system 
${\proj}V(p_{1},\cdots,p_{n})\subset {\proj}H^{0}(\bigwedge^{3}\mathcal{E})$ 
of sections vanishing at $p_{1}, \cdots, p_{n}$. 
When $n=5$, we have 
\begin{equation*}
\dim V(p_{1},\cdots,p_{5}) \geq 
h^{0}(\wedge^{3}\mathcal{E}) - 5 \cdot {\rm rk}(\wedge^{3}\mathcal{E}) = 20, 
\end{equation*}
so 
$F_{5}\to \pi(F_{5})$ is birationally a ${\proj}^{N}$-bundle with $N\geq 19$. 
Furthermore, by the same argument as Claim \ref{lem:BD dominant}, 
the above result $\kappa(\mathcal{F}_{6})\geq 0$ enables us to conclude 
that $F_{5}\to G(6, 10)^{5}$ is dominant. 
Therefore $F_{5}$ is rational. 
\end{proof}



\section{Lehn-Lehn-Sorger-van Straten eightfolds}\label{sec:LLSS}

In this section we prove Theorem \ref{thm:main} 
for the case of Lehn-Lehn-Sorger-van~Straten eightfolds \cite{LLSS}. 
They have the same parameter space and period space as the Beauville-Donagi case. 

Let $Y\subset {\proj}^{5}$ be a smooth cubic fourfold which does not contain a plane. 
The space $M^{gtc}(Y)$ of \textit{generalized twisted cubics} on $Y$ is defined as 
the closure of the locus of twisted cubics on $Y$ in the Hilbert scheme ${\rm Hilb}_{3m+1}(Y)$. 
By Lehn-Lehn-Sorger-van Straten \cite{LLSS}, 
$M^{gtc}(Y)$ is smooth and irreducible of dimension $10$, and 
there exists a natural contraction $M^{gtc}(Y)\to X(Y)$ 
to a holomorphic symplectic manifold $X(Y)$ 
with general fibers ${\proj}^{2}$. 
The variety $X(Y)$ is of $K3^{[4]}$ type (\cite{AL}) 
and has a polarization of Beauville norm $2$ and non-split type 
(see \cite{De} footnote 22). 
Hence its polarized Beauville lattice is isometric to the lattice 
$L_{cub}=2U \oplus 2E_{8}\oplus A_{2}$ 
considered in \S \ref{sec:BD}, 
and the monodromy group is evidently contained in ${\rm O}^{+}(L_{cub})$. 
We can reuse Lemma \ref{lem:cusp form BD}: 
since 
${\rm O}^{+}(L_{cub})= \langle \tilde{{\rm O}}^{+}(L_{cub}), -{\rm id} \rangle$ 
and the weights in Lemma \ref{lem:cusp form BD} are even, 
the cusp forms there are 
not just $\tilde{{\rm O}}^{+}(L_{cub})$-invariant 
but also ${\rm O}^{+}(L_{cub})$-invariant 
as remarked in \eqref{eqn:-id effect}.  

Let $H={\rm Hilb}^{gtc}({\proj}^{5})$ be the irreducible component of 
the Hilbert scheme ${\rm Hilb}_{3m+1}({\proj}^{5})$ 
that contains the locus of twisted cubics in ${\proj}^{5}$. 
Then $H$ is smooth of dimension $20$, 
and we have $M^{gtc}(Y)=H\cap {\rm Hilb}_{3m+1}(Y)$ 
for $Y$ as above (\cite{LLSS}). 
Let $U\subset |\mathcal{O}_{{\proj}^{5}}(3)|$ 
be the parameter space of smooth cubic fourfolds 
which does not contain a plane and has no nontrivial stabilizer in ${\rm PGL}_{6}$. 
The period map $U/{\rm PGL}_{6} \to {\GD}$, 
where $\Gamma={\rm O}^{+}(L_{cub})$, 
is generically finite and dominant (\cite{LLSS}, \cite{AL}). 
We consider the incidence 
\begin{equation*}
M^{gtc}_{n} = 
\{ \: (Y, C_{1}, \cdots, C_{n})\in U\times H^{n} \: | \: C_{i}\in M^{gtc}(Y) \: \} 
\subset U\times H^{n}. 
\end{equation*}
As noticed in \cite{LLSS}, 
the construction of $X(Y)$ can be done in family. 
This produces a smooth family $X\to U$ of symplectic eightfolds 
and a contraction $M^{gtc}_{1}\to X$ over $U$ 
with general fibers ${\proj}^{2}$. 
Taking the $n$-fold fiber product 
$X_{n} = X\times_{U} \cdots \times_{U} X$, 
we obtain a morphism 
$M^{gtc}_{n}\to X_{n}$ over $U$ 
with general fibers $({\proj}^{2})^{n}$. 
Let 
$\mathcal{F}_{n}=X_{n}/{\rm PGL}_{6}$. 
By Lemma \ref{lem:cusp form BD}, 
now with $48=20+4\cdot 7$ and $68=20+4\cdot 12$ 
($d=4$ in place of $d=2$) 
and with $\Gamma={\rm O}^{+}(L_{cub})$ in place of $\tilde{{\rm O}}^{+}(L_{cub})$, 
we see that 
$\mathcal{F}_{7}$ has positive geometric genus 
and $\kappa(\mathcal{F}_{12})>0$. 
It remains to show that $\mathcal{F}_{5}$ is unirational. 
It suffices to prove 

\begin{proposition}
$M^{gtc}_{5}$ is unirational. 
\end{proposition}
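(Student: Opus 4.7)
The plan is to project $M^{gtc}_5$ onto $H^5$ and realize it birationally as a projective bundle over a unirational base, in the same spirit as the Beauville-Donagi and Debarre-Voisin sections. Let $\pi \colon M^{gtc}_5 \to H^5$ be the projection to the $5$-tuples of generalized twisted cubics. For $(C_1, \ldots, C_5) \in \pi(M^{gtc}_5)$, the fiber is the open locus (cut out by smoothness, the no-plane condition, and triviality of stabilizer) inside the linear system
\[
{\proj}V(C_1, \ldots, C_5) := {\proj}\ker\bigl(H^0(\mathcal{O}_{{\proj}^{5}}(3)) \to \textstyle\bigoplus_{i=1}^{5} H^0(\mathcal{O}_{C_i}(3))\bigr)
\]
of cubic fourfolds containing every $C_i$. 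Since a generalized twisted cubic has Hilbert polynomial $3m+1$, we have $h^0(\mathcal{O}_{C_i}(3)) = 10$, so $\dim V(C_1, \ldots, C_5) \geq 56 - 50 = 6$, and the fiber has dimension $\geq 5$ wherever it is non-empty.

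If $\pi$ is dominant, the kernel of the fiberwise evaluation map is a vector bundle of generic rank $\geq 6$ on a dense open subset of $H^5$, and $M^{gtc}_5$ is birational to its projectivization --- a genuine ${\proj}^{N}$-bundle ($N \geq 5$) since it sits inside the trivial ${\proj}H^0(\mathcal{O}_{{\proj}^{5}}(3))$-bundle, hence Zariski-locally trivial. Thus $M^{gtc}_5$ is unirational as soon as $H^5$ is. For unirationality of $H$, note that every generalized twisted cubic spans a unique ${\proj}^{3} \subset {\proj}^{5}$, giving a morphism $H \to G(4, 6)$ whose fiber over $[{\proj}^{3}]$ is the smooth irreducible component of ${\rm Hilb}_{3m+1}({\proj}^{3})$ parameterizing twisted cubics, a $12$-dimensional rational homogeneous space for ${\rm PGL}_{4}$. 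The fibration is associated to the principal ${\rm PGL}_{4}$-bundle of frames of the universal rank-$4$ subbundle on $G(4, 6)$; pulling back along the ${\rm GL}_{4}$-frame bundle, which is a Zariski-locally trivial ${\rm GL}_{4}$-torsor and hence rational, trivializes the fibration, producing a dominant rational map from a rational variety onto $H$. Therefore $H^5$ is unirational.

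The main obstacle is the dominance of $\pi$, equivalently that five general generalized twisted cubics in ${\proj}^{5}$ impose $50$ independent linear conditions on cubic forms, i.e.\ the restriction map $H^0(\mathcal{O}_{{\proj}^{5}}(3)) \to \bigoplus_{i} H^0(\mathcal{O}_{C_i}(3))$ is surjective for generic $(C_1, \ldots, C_5)$. I would approach this directly: exhibit one explicit smooth cubic fourfold $Y \in U$ with generalized twisted cubics $C_1, \ldots, C_5 \in M^{gtc}(Y)$ satisfying $h^1(\mathcal{I}_{C_1 \cup \cdots \cup C_5}(3)) = 0$ --- for instance via a Pfaffian cubic fourfold and twisted cubics drawn from its rich geometry of degree-$3$ rational surfaces --- and conclude dominance by openness of the surjectivity condition. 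A Claim \ref{lem:BD dominant}-style recursive argument using $\kappa(\mathcal{F}_{n}) \geq 0$ for some $n \geq 7$ is less convenient here, because $M^{gtc}_n$ is always uniruled via its $({\proj}^{2})^n$-fibration over $X_n$, so non-uniruledness would need to be harvested at the level of $X_n$, which carries no natural projection to $H^n$.
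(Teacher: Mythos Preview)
Your broad strategy---project to $H^5$ and recognize $M^{gtc}_5$ birationally as a projective bundle---matches the paper. The crucial step, as you correctly isolate, is dominance of $\pi\colon M^{gtc}_5\to H^5$, and here your proposal has a genuine gap: you suggest exhibiting a single smooth $Y\in U$ with $C_1,\ldots,C_5\in M^{gtc}(Y)$ imposing independent conditions on cubics, but no such example is actually constructed. The method is sound in principle (since $M^{gtc}_5$ is irreducible of dimension $105$, one example with $\dim V(C_1,\ldots,C_5)=6$ forces $\dim\pi(M^{gtc}_5)\geq 100$), but note that your word ``equivalently'' is inaccurate: independence of conditions for a \emph{generic} tuple in $H^5$ does not by itself say that the generic cubic through such a tuple lies in $U$; what does work is exhibiting one example already inside $M^{gtc}_5$.

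The paper establishes dominance by a different argument, avoiding any explicit construction. It passes to the full incidence $(M^{gtc}_5)^{\ast}$ over all of $|\mathcal{O}_{\mathbb{P}^5}(3)|$, whose fibers over $H^5$ are always nonempty linear systems, and shows that the unique component dominating $H^5$ coincides with the one containing $M^{gtc}_5$. Concretely: for general $(C_1,\ldots,C_5)\in H^5$ and general (a priori singular) $Y\in\mathbb{P}V(C_1,\ldots,C_5)$, it proves that $(Y,C_1,\ldots,C_5)$ lies in the closure of $M^{gtc}_5$. An inductive step (dominance of $M^{gtc}_4\to H^4$) supplies a smooth $Y''\in U$ through $C_2,\ldots,C_5$; a general member $Y'$ of the pencil $\langle Y,Y''\rangle$ then lies in $U$ and still contains $C_2,\ldots,C_5$; and the structural result of \cite{LLSS} on flat families of nets of twisted cubics on cubic surfaces with at most ADE singularities lets one deform $C_1\subset Y\cap P_1$ to a twisted cubic on $Y'\cap P_1$. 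Your observation that a Claim~\ref{lem:BD dominant}-style recursion does not transplant directly (because the $(\mathbb{P}^2)^n$-fibration keeps $M^{gtc}_n$ uniruled) is correct, and indeed the paper's argument is of a different nature. Your justification of the unirationality of $H$ is a welcome addition; the paper uses this without comment.
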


\begin{proof}
We enlarge $M^{gtc}_{n}$ to the complete incidence over $|\mathcal{O}_{{\proj}^{5}}(3)|$: 
\begin{equation*}
(M^{gtc}_{n})^{\ast} = 
\{ \: (Y, C_{1}, \cdots, C_{n})\in |\mathcal{O}_{{\proj}^{5}}(3)|\times H^{n} \: | \: C_{i}\subset Y \: \}. 
\end{equation*}
The fiber of the projection 
$\pi\colon (M^{gtc}_{n})^{\ast}\to H^{n}$ over $(C_{1}, \cdots, C_{n})\in H^{n}$ 
is the linear system 
${\proj}V(C_{1},\cdots,C_{n})\subset |\mathcal{O}_{{\proj}^{5}}(3)|$ 
of cubics containing $C_{1}, \cdots, C_{n}$. 
When $n=5$, 
we have $\dim V(C_{1}, \cdots, C_{5})\geq 6$ for any $(C_{1}, \cdots, C_{5})\in H^{5}$, 
so $\pi$ is surjective, and 
there is a unique irreducible component of $(M^{gtc}_{5})^{\ast}$ 
of dimension $\geq 105$ that is birationally a ${\proj}^{N}$-bundle over $H^{5}$ with $N\geq 5$. 
On the other hand, 
$M^{gtc}_{5}$ is an open set of the unique irreducible component of $(M^{gtc}_{5})^{\ast}$ 
of dimension $105$ that dominates $|\mathcal{O}_{{\proj}^{5}}(3)|$. 
We want to show that these two irreducible components coincide: 
then $M^{gtc}_{5}\to H^{5}$ is dominant, 
and $M^{gtc}_{5}$ is birationally a ${\proj}^{5}$-bundle over $H^{5}$ 
and hence unirational. 

Let $(C_{1}, \cdots, C_{5})$ be a general point of $H^{5}$. 
By genericity we may assume that 
each $C_{i}$ is smooth and spans a $3$-plane $P_{i}\subset {\proj}^{5}$, 
$P_{i}\cap P_{j}$ is a line, and $C_{i}\cap P_{j}=\emptyset$. 
Let $(Y, C_{1}, \cdots, C_{5})$ be a general point of 
$\pi^{-1}(C_{1}, \cdots, C_{5})={\proj}V(C_{1}, \cdots, C_{5})$. 
It suffices to show that 
generalization of $(Y, C_{1}, \cdots, C_{5})$, 
i.e., small perturbation inside $(M^{gtc}_{5})^{\ast}$, 
contains $(Y', C_{1}', \cdots, C_{5}')$ with $Y'\in U$. 

We may assume that $Y$ is irreducible and contains no $3$-plane, 
because the locus of $(Y, C_{1}, \cdots, C_{5})$ with $Y$ reducible or containing a $3$-plane 
has dimension $<105$. 
Since each $C_{i}$ is smooth,  
the results of \cite{LLSS} \S 2 tell us that  
the cubic surface $S_{i}=Y\cap P_{i}$ is either 
(A) with at most ADE singularities or 
(B) integral but non-normal (singular along a line) or  
(C) reducible. 
By comparison of dimension again, 
we may assume that at least one, say $S_{1}$, is of type (A).  

Now $(C_{2}, \cdots, C_{5})$ is a general point of $H^{4}$. 
The projection $M^{gtc}_{4}\to H^{4}$ is dominant 
as can be checked similarly in an inductive way. 
Therefore there exists a cubic fourfold $Y''\in U$ containing $C_{2}, \cdots, C_{5}$. 
Let $Y'$ be a general member of the pencil $\langle Y, Y'' \rangle$.  
Since $Y''\in U$, we have $Y'\in U$. 
Since both $Y$ and $Y''$ contain $C_{2}, \cdots, C_{5}$, 
$Y'$ contains $C_{2}, \cdots, C_{5}$ too. 
In the fixed $3$-plane $P_{1}$, 
the cubic surface $S'=Y'\cap P_{1}$ degenerates to 
the cubic surface $S_{1}=Y\cap P_{1}$ with at most ADE singularities, 
so $S'$ has at most ADE singularities too. 
By \cite{LLSS} Theorem 2.1, 
the nets of twisted cubics on cubic surfaces degenerate flatly in such a family. 
Therefore we have a twisted cubic $C'\subset S'$ 
which specializes to $C_{1}\subset S_{1}$ as $Y'$ specializes to $Y$. 
Therefore $(Y', C', C_{2}, \cdots, C_{5})\in M_{5}^{gtc}$ specializes to $(Y, C_{1}, C_{2}, \cdots, C_{5})$. 
This proves our assertion. 
\end{proof}


\section{Varieties of power sums of cubic fourfolds}\label{sec:IR}

In this section we prove Theorem \ref{thm:main} for the case of Iliev-Ranestad fourfolds \cite{IR}. 
Let $H$ be the irreducible component of the Hilbert scheme 
${\rm Hilb}_{10}|\mathcal{O}_{{\proj}^{5}}(1)|$ 
of length $10$ subschemes of $|\mathcal{O}_{{\proj}^{5}}(1)|$ 
that contains the locus of $10$ distinct points. 
For a cubic fourfold $Y\subset {\proj}^{5}$ 
with defining equation $f\in H^{0}(\mathcal{O}_{{\proj}^{5}}(3))$, 
its variety of $10$ sums of powers 
$VSP(Y)=VSP(Y, 10)$ 
is defined as the closure in $H$ 
of the locus of distinct 
$([l_{1}], \cdots, [l_{10}])$ such that 
$f=\sum_{i}\lambda_{i}l_{i}^{3}$ 
for some $\lambda_{i}\in {\C}$. 
Iliev-Ranestad \cite{IR}, \cite{IR2} proved that 
when $Y$ is general, 
$VSP(Y)$ is a holomorphic symplectic manifold of $K3^{[2]}$ type, 
with polarization of Beauville norm $38$ and non-split type. 
(See also \cite{Mo} for the computation of polarization.) 
Hence its polarized Beauville lattice is isometric to 
\begin{equation*}\label{eqn:LDV} 
L_{IR} = 2U \oplus 2E_{8} \oplus K, \qquad 
K=\begin{pmatrix} -2 & 1 \\ 1 & -10 \end{pmatrix}. 
\end{equation*}
Let $\Gamma=\tilde{{\rm O}}^{+}(L_{IR})$. 

\begin{lemma}\label{lem:cusp form IR}
There exists an embedding $K\hookrightarrow E_{8}$ with $r(K^{\perp})=40$. 
The resulting cusp form $\Phi_{12}|_{L_{IR}}$ has weight $32$. 
Moreover, $S_{44}(\Gamma, \det)$ has dimension $\geq 2$. 
\end{lemma}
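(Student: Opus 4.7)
The plan is to mimic the three-step structure of Lemma~\ref{lem:cusp form DV}: produce an explicit primitive embedding of $K$ into $E_{8}$ with exactly $40$ roots in the orthogonal complement, use the quasi-pullback formalism to read off the weight of $\Phi_{12}|_{L_{IR}}$, and finally multiply by a Gritsenko lift whose source space is dimension-computable by Bruinier's formula.

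For the embedding $K \hookrightarrow E_{8}$, I would use the basis $v_{1}, v_{2}$ of $K$ with $(v_{1},v_{1})=-2$, $(v_{2},v_{2})=-10$, $(v_{1},v_{2})=1$, and try
\begin{equation*}
v_{1} \mapsto (1,-1,0,0,0,0,0,0), \qquad
v_{2} \mapsto (0,1,3,0,0,0,0,0)
\end{equation*}
in the model \eqref{eqn:E8} of $E_{8}$. A direct check gives the required Gram matrix. To count roots in $K^{\perp}\cap E_{8}$: among the integral roots $\delta_{\pm i,\pm j}$, orthogonality to $v_{1}$ forces equal first and second coordinates, and orthogonality to $v_{2}$ forces $x_{2}+3x_{3}=0$; together these leave precisely those $\pm e_{i}\pm e_{j}$ with $i,j\in\{4,5,6,7,8\}$, giving $\binom{5}{2}\cdot 4 = 40$ roots. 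Among the half-integral roots $\delta'_{S}$, the condition $x_{2}+3x_{3}=0$ with $x_{2},x_{3}\in\{\pm 1/2\}$ has no solution, contributing nothing. Thus $r(K^{\perp})=40$, and the quasi-pullback formula yields weight $12+40/2=32$ for $\Phi_{12}|_{L_{IR}}$, with the cusp form property following from $r(K^{\perp})>0$ as recalled in \S\ref{ssec:modular}.

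For the last assertion, since $44 = 32+12$, it is enough to exhibit a two-dimensional subspace of $M_{12}(\Gamma)$: multiplying by $\Phi_{12}|_{L_{IR}}$ maps this injectively into $S_{44}(\Gamma,\det)$ (injectivity because $\Phi_{12}|_{L_{IR}}$ is nonzero and $M_{k}(\Gamma,\det)$ is torsion-free as a module over $M_{\ast}(\Gamma)$). I would produce such a subspace via the Gritsenko lift $M_{3}(\rho_{L_{IR}}) \hookrightarrow M_{12}(\Gamma)$, so the task reduces to showing $\dim M_{3}(\rho_{L_{IR}}) \geq 2$. Since $\det L_{IR}=19$, the discriminant group $A_{L_{IR}}\simeq\Z/19$ is simple, so Bruinier's dimension formula \cite{Br} specializes to an explicit expression involving only the signature mod $8$, the Gauss sum associated with the quadratic form on $\Z/19$, and local contributions at $p=19$; plugging in $k=3$ and $b=20$ should yield a value $\geq 2$.

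The main obstacle is the numerological input at the end. Finding the embedding is straightforward once one realizes $10 = 1+9$ suggests a two-coordinate vector, and the quasi-pullback weight then falls out automatically. The genuine work is the Bruinier computation: one must carry out the Gauss-sum/local-density bookkeeping for the discriminant form of $L_{IR}$ carefully, and if the resulting dimension happens to be $1$ at $k=3$ one would need to pass to a higher-weight Gritsenko lift (e.g.\ use $M_{11}(\rho_{L_{IR}})$ mapped to $M_{20}(\Gamma)$ and produce weight $52$ instead, together with a separate weight-$44$ construction), which would complicate the statement. I expect, by analogy with the $L_{DV}$ computation where $\dim M_{k}(\rho_{L_{DV}}) = (k-1)/2$, that a similar linear-in-$k$ formula holds here and delivers $\dim M_{3}(\rho_{L_{IR}}) \geq 2$ directly.
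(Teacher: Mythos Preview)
Your proposal is correct and follows essentially the same approach as the paper: the identical embedding $v_{1}\mapsto(1,-1,0,\ldots,0)$, $v_{2}\mapsto(0,1,3,0,\ldots,0)$, the same root count, and the same use of the Gritsenko lift of $M_{3}(\rho_{L_{IR}})$. The paper completes the final step you flag as the main obstacle by computing $\dim M_{k}(\rho_{L_{IR}})=[(5k-3)/6]$ from Bruinier's formula, which at $k=3$ gives exactly $2$.
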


\begin{proof}
Let $v_{1}, v_{2}$ be the basis of $K$ in the above expression. 
We embed $K\hookrightarrow E_{8}$ by sending, in the model \eqref{eqn:E8} of $E_{8}$,  
\begin{equation*}
v_{1}\mapsto (1, -1, 0, \cdots, 0), \quad 
v_{2}\mapsto (0, 1, 3, 0, \cdots, 0). 
\end{equation*}
The roots of $E_{8}$ orthogonal to these two vectors are 
$\delta_{\pm i, \pm j}$ with $i, j\geq 4$, 
whose number is $2\cdot 5 \cdot 4=40$. 
Hence $\Phi_{12}|_{L_{IR}}$ has weight $12+20=32$. 
Furthermore, computing the formula in \cite{Br}, we see that 
$\dim M_{k}(\rho_{L_{IR}})=[(5k-3)/6]$. 
Product of $\Phi_{12}|_{L_{IR}}$ with the Gritsenko lift of $M_{3}(\rho_{L_{IR}})$ 
implies the last assertion. 
\end{proof}
 
Let $U$ be the open locus of $|\mathcal{O}_{{\proj}^{5}}(3)|$ 
where $VSP(Y)$ is smooth of dimension $4$ and $Y$ is smooth with no nontrivial stabilizer. 
The period map $U/{\rm PGL}_{6}\to {\GD}$  
is generically finite and dominant (\cite{IR}, \cite{IR2}). 
Consider the incidence 
\begin{equation*}
VSP_{n} = 
\{ \: (Y, \Gamma_{1}, \cdots, \Gamma_{n}) \in U \times H^{n} \: | \: \Gamma_{i}\in VSP(Y) \: \} 
\subset U \times H^{n} 
\end{equation*}
and let 
$\mathcal{F}_{n}=VSP_{n}/{\rm PGL}_{6}$. 
By Lemma \ref{lem:cusp form IR}, 
with $32=20+2\cdot 6$ and $44=20+2\cdot 12$,  
we see that $\mathcal{F}_{6}$ has positive geometric genus 
and $\kappa(\mathcal{F}_{12})>0$. 
On the other hand, 
as observed in \cite{IR}, 
$VSP_{1}$ is birationally a ${\proj}^{9}$-bundle over $H$ 
and hence rational. 
Therefore $\mathcal{F}_{1}$ is unirational. 
This proves Theorem \ref{thm:main} in the present case. 

\begin{remark}
There also exist embeddings $K\hookrightarrow E_{8}$ with $r(K^{\perp})=30$ 
(send $v_{2}$ to $(0, 1, 1, 2, 2, 0, 0, 0)$ or to $(0, 1, 1, 1, 1, 1, 1, 2)$), 
but the resulting cusp form has weight $27$, 
which is not of the form $20+2n$. 
This, however, suggests that $\kappa\geq 0$ would actually start at least from $n=4$. 
\end{remark}


\section{Double EPW series}\label{sec:EPW}

In this section we prove Theorem \ref{thm:main} for 
the cases of double EPW sextics by O'Grady \cite{OG} 
and of double EPW cubes by Iliev-Kapustka-Kapustka-Ranestad \cite{IKKR}. 
They share some common features: 
both are parametrized by the Lagrangian Grassmannian $LG=LG(\bigwedge^{3}{\C}^{6})$, 
where $\bigwedge^{3}{\C}^{6}$ is equipped with the canonical symplectic form 
$\bigwedge^{3}{\C}^{6} \times \bigwedge^{3}{\C}^{6} \to \bigwedge^{6}{\C}^{6}$. 
Both are constructed as double covers of degeneracy loci related to $\bigwedge^{3}{\C}^{6}$. 
And both have $L_{EPW}=2U\oplus 2E_{8}\oplus 2A_{1}$ 
as the polarized Beauville lattices. 
Thus they share the same parameter space and essentially the same period space. 

The presence of covering involution requires extra care 
in the construction of the universal (or perhaps we should say rather ``tautological'') family 
over a Zariski open set of the moduli space.

\subsection{Double EPW sextics}\label{ssec:OG}

We recall the construction of double EPW sextics following \cite{OG}, \cite{OG2}. 
Let $F$ be the vector bundle over ${\proj}^{5}$ 
whose fiber over $[v]\in {\proj}^{5}$ is the image of 
${\C}v \wedge (\bigwedge^{2}{\C}^{6})\to \bigwedge^{3}{\C}^{6}$. 
For $[A]\in LG$ we write 
$Y_{A}[k]\subset {\proj}^{5}$ 
for the locus of those $[v]\in {\proj}^{5}$ such that $\dim (A\cap F_{v})\geq k$. 
We say that $A$ is generic if 
$Y_{A}[3]= \emptyset$ and 
${\proj}A \cap G(3, 6) = \emptyset$ in ${\proj}(\bigwedge^{3}{\C}^{6})$. 
In that case, 
$Y_{A}=Y_{A}[1]$ is a sextic hypersurface in ${\proj}^{5}$ singular along $Y_{A}[2]$, 
$Y_{A}[2]$ is a smooth surface, and 
$Y_{A}$ has a transversal family of $A_{1}$-singularities along $Y_{A}[2]$. 
Let $\lambda_{A}\colon F\to (\bigwedge^{3}{\C}^{6}/A) \otimes \mathcal{O}_{{\proj}^{5}}$ 
be the composition of the inclusion 
$F\hookrightarrow \bigwedge^{3}{\C}^{6} \otimes \mathcal{O}_{{\proj}^{5}}$ 
and the projection 
$\bigwedge^{3}{\C}^{6}\otimes \mathcal{O}_{{\proj}^{5}} \to 
(\bigwedge^{3}{\C}^{6}/A) \otimes \mathcal{O}_{{\proj}^{5}}$. 
Then ${\rm coker}(\lambda_{A})=i_{\ast}\zeta_{A}$ 
for a coherent sheaf $\zeta_{A}$ on $Y_{A}$ 
where $i:Y_{A}\hookrightarrow {\proj}^{5}$ is the inclusion. 
Let $\xi_{A}=\zeta_{A}\otimes \mathcal{O}_{Y_{A}}(-3)$. 
If we choose a Lagrangian subspace $B$ of $\bigwedge^{3}{\C}^{6}$ transverse to $A$, 
one can define a multiplication $\xi_{A}\times \xi_{A}\to \mathcal{O}_{Y_{A}}$. 
Although $B$ is necessary for the construction, 
the resulting multiplication does not depend on the choice of $B$ (\cite{OG2} p.152). 
Then let 
$X_{A}={\rm Spec}(\mathcal{O}_{Y_{A}}\oplus \xi_{A})$. 
This is a double cover of $Y_{A}$. 
If $A$ is generic in the above sense, 
$X_{A}$ is a holomorphic symplectic manifold of $K3^{[2]}$ type. 
The polarization (pullback of $\mathcal{O}_{{\proj}^{5}}(1)$) 
has Beauville norm $2$ and is of split type, 
and the polarized Beauville lattice is isometric to $L_{EPW}$. 
If $LG^{\circ}\subset LG$ is the open locus of generic $A$, 
the period map 
$LG^{\circ}/{\rm PGL}_{6}\to {\GD}$, 
where $\Gamma=\tilde{{\rm O}}^{+}(L_{EPW})$, 
is birational (\cite{OG} \S 6 and \cite{Mark} \S 8). 

\begin{lemma}\label{lem:cusp form EPW6}
The cusp form $\Phi_{12}|_{L_{EPW}}$ has weight $42$. 
Moreover, $S_{58}(\Gamma, \det)$ has dimension $\geq 2$. 
\end{lemma}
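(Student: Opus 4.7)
The plan is to follow the template set by Lemmas \ref{lem:cusp form BD}, \ref{lem:cusp form DV}, and \ref{lem:cusp form IR}: compute the weight of $\Phi_{12}|_{L_{EPW}}$ from an explicit primitive embedding of the negative-definite complement into a single $E_{8}$ factor of $II_{2,26}$, then build cusp forms of weight $58$ by multiplying $\Phi_{12}|_{L_{EPW}}$ with Gritsenko lifts.

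For the weight, I would embed $2A_{1}\hookrightarrow E_{8}$ by sending the two $A_{1}$ generators to
\begin{equation*}
v_{1}\mapsto (1,-1,0,\ldots,0),\qquad v_{2}\mapsto (0,0,1,-1,0,\ldots,0)
\end{equation*}
in the model \eqref{eqn:E8}. Both are orthogonal $(-2)$-vectors, and the embedding is primitive because any rational combination with all coordinates in $\Z$ forces the coefficients to be integers (the last six entries are zero, ruling out the half-integer alternative). A $(-2)$-vector of $E_{8}$ lies in the complement $N=(2A_{1})^{\perp}\cap E_{8}$ iff $x_{1}=x_{2}$ and $x_{3}=x_{4}$. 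Among the integer roots $\delta_{\pm i,\pm j}$ this gives three contributions: the $4\binom{4}{2}=24$ roots supported on $\{5,\ldots,8\}$, the two roots $\pm(1,1,0,\ldots,0)$, and the two roots $\pm(0,0,1,1,0,\ldots,0)$, totalling $28$. Among the half-integer roots $\pm\delta'_{S}$, the conditions say that the indicator $\mathbf{1}_{S}$ is constant on $\{1,2\}$ and on $\{3,4\}$; parametrizing by the two constant values (4 choices) and an even-sized subset of $\{5,6,7,8\}$ ($1+6+1=8$ choices) yields $32$ such roots. Hence $r(N)=60$, so by the recollection in \S\ref{ssec:modular} the weight of $\Phi_{12}|_{L_{EPW}}$ is $12+r(N)/2=42$ and it is a cusp form since $r(N)>0$.

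For the second assertion, $L_{EPW}$ is even, contains $2U$, and has $b=20$, so the Gritsenko lift provides an injection $M_{7}(\rho_{L_{EPW}})\hookrightarrow M_{16}(\tilde{{\rm O}}^{+}(L_{EPW}))$. A direct evaluation of Bruinier's formula \cite{Br} at the discriminant form $A_{L_{EPW}}\cong (\Z/2)^{2}$ with quadratic form $(-1/2)\oplus(-1/2)$ mod $2\Z$ yields $\dim M_{7}(\rho_{L_{EPW}})\geq 2$. Multiplying two linearly independent such lifts by $\Phi_{12}|_{L_{EPW}}$ gives two elements of $M_{58}(\Gamma,\det)$; they are cuspidal because $\Phi_{12}|_{L_{EPW}}$ already vanishes at the cusps, and linearly independent because multiplication by a nonzero holomorphic section of a line bundle on the connected domain $\mathcal{D}$ is injective.

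The main obstacle is the dimension estimate $\dim M_{7}(\rho_{L_{EPW}})\geq 2$: at this small weight and small discriminant the principal term of Bruinier's formula is close to the lower bound, so one must track the local contributions at $p=2$ (Gauss sums and eigenvalues of the Weil representation on $\C A_{L_{EPW}}$) with care. Should the sharper bound fail, a fallback is to replace the target weight $16$ by $18$, using Gritsenko lifts from $M_{9}(\rho_{L_{EPW}})$ (whose dimension is more comfortably $\geq 2$), landing in $S_{60}(\Gamma,\det)$ instead; this weaker statement would still be enough for the application to Theorem \ref{thm:main} in the double EPW sextics case.
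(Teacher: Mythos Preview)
Your argument is correct and follows the same strategy as the paper: compute the weight from $r((2A_{1})^{\perp}\cap E_{8})=60$ and then multiply $\Phi_{12}|_{L_{EPW}}$ by Gritsenko lifts of $M_{7}(\rho_{L_{EPW}})$. The paper shortcuts your explicit root count by noting $(2A_{1})^{\perp}\cap E_{8}\simeq D_{6}$, and it removes your hedging on the dimension by recording the closed formula $\dim M_{k}(\rho_{L_{EPW}})=[k/3]$ from \cite{Br}, so that $\dim M_{7}(\rho_{L_{EPW}})=2$ exactly; your fallback to weight $60$ is therefore unnecessary (and would in fact weaken the bound in Theorem~\ref{thm:main} from $n=19$ to $n=20$).
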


\begin{proof}
We embed $2A_{1}$ in $E_{8}$ in any natural way. 
Then $(2A_{1})^{\perp}\simeq D_{6}$ has $60$ roots,  
so $\Phi_{12}|_{L_{EPW}}$ has weight $42$. 
Working out the formula in \cite{Br}, we see that 
$\dim M_{k}(\rho_{L_{EPW}})=[k/3]$. 
Product of $\Phi_{12}|_{L_{EPW}}$ with the Gritsenko lift of $M_{7}(\rho_{L_{EPW}})$ 
implies the second assertion. 
\end{proof}

The construction of double EPW sextics can be done 
over a Zariski open set of the moduli space as follows (cf.~\cite{OG2}). 
Let $LG'\subset LG^{\circ}$ be the open locus where 
$A$ has no nontrivial stabilizer, 
and 
$\pi_{1}\colon LG'\times {\proj}^{5}\to LG'$, 
$\pi_{2}\colon LG'\times {\proj}^{5}\to {\proj}^{5}$ 
be the projections. 
Let 
$Y=\cup_{A}Y_{A}\subset LG'\times {\proj}^{5}$ 
be the universal family of EPW sextics over $LG'$. 

\begin{lemma}\label{lem:shrink} 
There exists a ${\rm PGL}_{6}$-invariant Zariski open set $LG''$ of $LG'$ such that 
$\mathcal{O}_{LG''\times {\proj}^{5}}(Y) \simeq \pi_{2}^{\ast}\mathcal{O}_{{\proj}^{5}}(6)$ 
as ${\rm PGL}_{6}$-linearized line bundles over $LG''\times {\proj}^{5}$. 
\end{lemma}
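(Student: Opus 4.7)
The plan is to write $\mathcal{O}_{LG' \times \proj^{5}}(Y)$ in the bidegree form $\pi_{1}^{\ast}\mathcal{L} \otimes \pi_{2}^{\ast}\mathcal{O}_{\proj^{5}}(6)$ for a ${\rm PGL}_{6}$-linearized line bundle $\mathcal{L}$ on $LG'$, and then to shrink $LG'$ so that $\mathcal{L}$ becomes trivially linearized. To begin, set $\mathcal{N} := \mathcal{O}_{LG' \times \proj^{5}}(Y) \otimes \pi_{2}^{\ast}\mathcal{O}_{\proj^{5}}(-6)$. For each $[A] \in LG'$, the sextic $Y_{A} \subset \proj^{5}$ has class $6H$, so $\mathcal{N}|_{\{[A]\} \times \proj^{5}} \simeq \mathcal{O}_{\proj^{5}}$. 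Since $H^{0}(\proj^{5}, \mathcal{O}) = \C$, cohomology and base change along the smooth projective morphism $\pi_{1}$ shows that $\mathcal{L} := (\pi_{1})_{\ast}\mathcal{N}$ is a line bundle on $LG'$ and that the adjunction map $\pi_{1}^{\ast}\mathcal{L} \to \mathcal{N}$ is an isomorphism. Both constructions respect the ${\rm PGL}_{6}$-linearizations by the functoriality of equivariant pushforward and adjunction.

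Second, I would shrink to trivialize $\mathcal{L}$. Since every $[A] \in LG'$ has trivial ${\rm PGL}_{6}$-stabilizer, the conventions recalled in \S\ref{sec:intro} ensure that the geometric quotient $LG'/{\rm PGL}_{6}$ exists as an \'etale-locally trivial ${\rm PGL}_{6}$-bundle, over which $\mathcal{L}$ descends to a line bundle $\bar{\mathcal{L}}$. Every line bundle on a variety is trivial on some Zariski open subset, so I pick $V \subset LG'/{\rm PGL}_{6}$ with $\bar{\mathcal{L}}|_{V}$ trivial and let $LG''$ be its preimage in $LG'$. Then $LG''$ is ${\rm PGL}_{6}$-invariant and $\mathcal{L}|_{LG''}$ is trivial as a line bundle. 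Because $LG''$ is open in the irreducible variety $LG$ and ${\rm PGL}_{6}$ is semisimple with no nontrivial characters, the linearization on this trivial line bundle must itself be trivial. Pulling back by $\pi_{1}$ and tensoring with $\pi_{2}^{\ast}\mathcal{O}_{\proj^{5}}(6)$ yields the desired isomorphism of ${\rm PGL}_{6}$-linearized line bundles on $LG'' \times \proj^{5}$.

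The only delicate points are the ${\rm PGL}_{6}$-equivariance of the seesaw isomorphism $\pi_{1}^{\ast}\mathcal{L} \simeq \mathcal{N}$ and the claim that the trivialization of $\mathcal{L}|_{LG''}$ is automatically compatible with the linearization. Both amount to standard facts about equivariant sheaves combined with the vanishing of the character group of ${\rm PGL}_{6}$, so I do not anticipate any significant obstacle.
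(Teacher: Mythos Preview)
Your argument is correct and matches the paper's approach: both use a seesaw argument to write $\mathcal{O}(Y)$ as $\pi_{2}^{\ast}\mathcal{O}_{\proj^{5}}(6)$ twisted by a line bundle pulled back from the base, then shrink the base to trivialize that twist. The only difference is cosmetic---the paper first passes to the Brauer--Severi quotient $\mathcal{P}=(LG'\times\proj^{5})/{\rm PGL}_{6}$ and compares $\mathcal{O}_{\mathcal{P}}(\mathcal{Y})$ against the relative canonical $K_{\pi}$ there, whereas you carry out the seesaw equivariantly upstairs and descend only the residual factor $\mathcal{L}$.
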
 

\begin{proof}
Consider the quotient 
$\mathcal{Y}=Y/{\rm PGL}_{6}$, 
which is a divisor of the Brauer-Severi variety 
$\mathcal{P}=(LG'\times{\proj}^{5})/{\rm PGL}_{6}$ 
over $\mathcal{M}=LG'/{\rm PGL}_{6}$. 
Each fiber of $\mathcal{Y}\to \mathcal{M}$ is a canonical divisor of 
the fiber of $\pi:\mathcal{P}\to\mathcal{M}$. 
This implies that  
$\mathcal{O}_{\mathcal{P}}(\mathcal{Y}) \simeq 
K_{\pi}\otimes \pi^{\ast}\mathcal{O}_{\mathcal{M}}(D)$ 
for some divisor $D$ of $\mathcal{M}$. 
Removing the support of $D$ from $\mathcal{M}$, 
we obtain $\mathcal{O}_{\mathcal{P}}(\mathcal{Y}) \simeq K_{\pi}$ 
over its complement. 
Pulling back this isomorphism to $LG'\times {\proj}^{5}\to LG'$, 
we obtain the desired ${\rm PGL}_{6}$-equivariant isomorphism. 
\end{proof}

We rewrite $LG''=LG'$ and $Y|_{LG''}=Y$. 
Let $E$ be the universal quotient vector bundle of rank $10$ over $LG'$. 
We have a natural homomorphism 
$\lambda\colon \pi_{2}^{\ast}F\to \pi_{1}^{\ast}E$ 
over $LG'\times {\proj}^{5}$ 
whose restriction to $\{ A \} \times {\proj}^{5}$ is $\lambda_{A}$, 
and ${\rm coker}(\lambda)=i_{\ast}\zeta$ 
for a coherent sheaf $\zeta$ on $Y$ 
where $i\colon Y\to LG'\times {\proj}^{5}$ is the inclusion. 
As was done in \cite{OG2}, 
if we choose $B\in LG$ and let 
$U_{B}\subset LG'$ be the open locus of those $A$ transverse to $B$, 
we have a multiplication $\zeta\times \zeta \to \mathcal{O}_{Y}(Y)$ 
over $Y|_{U_{B}}$. 
Since the multiplication does not depend on the choice of $B$ at each fiber, 
we obtain an ${\rm SL}_{6}$-equivariant multiplication 
$\zeta\times \zeta \to \mathcal{O}_{Y}(Y)$ 
over the whole $Y$. 
If we put 
$\xi=\zeta\otimes \mathcal{O}_{Y}(-3)$, 
Lemma \ref{lem:shrink} enables us to pass to an ${\rm SL}_{6}$-equivariant multiplication 
$\xi\times \xi \to \mathcal{O}_{Y}$. 
Since the scalar matrices in ${\rm SL}_{6}$ act trivially on $\xi$, 
$\xi$ is actually ${\rm PGL}_{6}$-linearized and 
this multiplication is ${\rm PGL}_{6}$-equivariant. 

Now taking $X={\rm Spec}(\mathcal{O}_{Y}\oplus \xi)$, 
we obtain a universal family of double EPW sextics over $LG'$ 
acted on by ${\rm PGL}_{6}$. 
Let
$\mathcal{M}=LG'/{\rm PGL}_{6}$, 
$\mathcal{F}=X/{\rm PGL}_{6}$ and 
$\mathcal{F}_{n}=\mathcal{F}\times_{\mathcal{M}}\cdots \times_{\mathcal{M}} \mathcal{F}$ 
($n$ times). 
Note that this is not a moduli space even birationally, 
as it is not mod out by the covering involution. 
By Lemma \ref{lem:cusp form EPW6}, 
with $42=20+2\cdot 11$ and $58=20+2\cdot 19$, 
we see that 
$\mathcal{F}_{11}$ has positive geometric genus and 
$\kappa(\mathcal{F}_{19})>0$. 
This proves Theorem \ref{thm:main} in the case of double EPW sextics.

\subsection{Double EPW cubes}\label{ssec:IKKR}

We recall the construction of double EPW cubes following \cite{IKKR}. 
For $[U]\in G(3, 6)$, we write 
$T_{U}=(\bigwedge^{2}U)\wedge {\C}^{6} \subset \bigwedge^{3}{\C}^{6}$. 
For $[A]\in LG$, let 
$D_{k}^{A}\subset G(3, 6)$ be the locus of those $[U]$ with $\dim (A\cap T_{U})\geq k$. 
We say that $A$ is generic if 
$D^{A}_{4}=\emptyset$ and 
${\proj}A \cap G(3, 6)=\emptyset$ in ${\proj}(\bigwedge^{3}{\C}^{6})$. 
In that case, 
$D^{A}_{2}$ is a sixfold singular along $D^{A}_{3}$, 
$D^{A}_{3}$ is a smooth threefold, and 
the singularities of $D^{A}_{2}$ is a transversal family of 
$\frac{1}{2}(1, 1, 1)$ quotient singularity along $D^{A}_{3}$. 
Let $\tilde{D}_{2}^{A}\to D^{A}_{2}$ be the blow-up at $D^{A}_{3}$ 
and $E\subset \tilde{D}_{2}^{A}$ be the exceptional divisor. 
Then $\tilde{D}_{2}^{A}$ is smooth, and 
$E$ is a smooth bi-canonical divisor of $\tilde{D}_{2}^{A}$ (\cite{IKKR} p.~254). 
Take the double cover $\tilde{Y}_{A}\to \tilde{D}_{2}^{A}$ branched over $E$ and 
contract the ${\proj}^{2}$-ruling of the ramification divisor 
by using pullback of some multiple of $\mathcal{O}_{D^{A}_{2}}(1)$. 
This produces a holomorphic symplectic manifold $Y_{A}$ 
of $K3^{[3]}$ type (\cite{IKKR} Theorem 1.1). 

The polarization has Beauville norm $4$ and divisibility $2$,  
so the polarized Beauville lattice is isometric to $L_{EPW}$ by \cite{GHS10}. 
The monodromy group is evidently contained in ${\rm O}^{+}(L_{EPW})$ 
(but whether it is smaller seems unclear to me). 
The quotient ${\rm O}^{+}(L_{EPW})/\tilde{{\rm O}}^{+}(L_{EPW})$ 
is $\frak{S}_{2}$ generated by the switch of the two copies of $A_{1}$, 
say $\iota\in {\rm O}^{+}(L_{EPW})$. 
Construction of cusp forms becomes more delicate than the previous cases, 
as $\Phi_{12}|_{L_{EPW}}$ is \textit{anti}-invariant under $\iota$. 

\begin{lemma}\label{lem:cusp form EPW cube}
Let $\Gamma={\rm O}^{+}(L_{EPW})$. 
Then  
$S_{68}(\Gamma, \det) \ne \{ 0 \}$ and 
$S_{80}(\Gamma, \det)$ has dimension $\geq 2$. 
\end{lemma}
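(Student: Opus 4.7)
The plan is to produce the required cusp forms by multiplying $\Phi_{12}|_{L_{EPW}}$ by suitable Gritsenko lifts, while carefully tracking characters in view of the stated $\iota$-anti-invariance. The first step is to convert anti-invariance into the right character on the full group. Since $\iota$ acts on $L_{EPW}$ by swapping the two summands $A_{1}$, i.e., as a single transposition on the rank-$2$ sublattice $2A_{1}$ and trivially on $2U\oplus 2E_{8}$, we have $\det(\iota)=-1$. Combined with the stated anti-invariance $\iota\cdot \Phi_{12}|_{L_{EPW}}=-\Phi_{12}|_{L_{EPW}}$, this says precisely that $\Phi_{12}|_{L_{EPW}}$ transforms by the character $\det$ under the full group $\Gamma={\rm O}^{+}(L_{EPW})$, since it already transforms by $\det$ under $\tilde{{\rm O}}^{+}(L_{EPW})$. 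Hence $\Phi_{12}|_{L_{EPW}}\in S_{42}(\Gamma,\det)$.

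Next, to climb from weight $42$ to the target weights $68=42+26$ and $80=42+38$, I would multiply by trivial-character $\Gamma$-modular forms of weights $26$ and $38$. These I would construct as Gritsenko lifts, being careful to take the $\iota$-invariant part so the output descends from $\tilde{{\rm O}}^{+}(L_{EPW})$ to all of $\Gamma$. Since the Gritsenko lifting $M_{k}(\rho_{L})\hookrightarrow M_{k+9}(\tilde{{\rm O}}^{+}(L))$ is ${\rm O}^{+}(L)$-equivariant and $\Gamma/\tilde{{\rm O}}^{+}(L_{EPW})={\rm O}(A_{L_{EPW}})=\langle \iota\rangle$, taking invariants yields an injection
\begin{equation*}
M_{k}(\rho_{L_{EPW}})^{{\rm O}(A_{L_{EPW}})}\hookrightarrow M_{k+9}(\Gamma).
\end{equation*}
Multiplying by the nonzero cusp form $\Phi_{12}|_{L_{EPW}}\in S_{42}(\Gamma,\det)$ preserves cuspidality and linear independence, so
\begin{equation*}
\Phi_{12}|_{L_{EPW}}\cdot M_{k}(\rho_{L_{EPW}})^{{\rm O}(A_{L_{EPW}})} \subset S_{k+51}(\Gamma,\det).
\end{equation*}
Taking $k=17$ produces elements of $S_{68}(\Gamma,\det)$, and $k=29$ produces elements of $S_{80}(\Gamma,\det)$.

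It therefore suffices to prove $\dim M_{17}(\rho_{L_{EPW}})^{{\rm O}(A_{L_{EPW}})}\geq 1$ and $\dim M_{29}(\rho_{L_{EPW}})^{{\rm O}(A_{L_{EPW}})}\geq 2$. The total dimensions $\dim M_{k}(\rho_{L_{EPW}})=[k/3]$ from \cite{Br} give $5$ and $9$, respectively, so these should hold comfortably, but the honest input is the decomposition under $\iota$, which is governed by the formula of \cite{Ma2} for the ${\rm O}(A_{L})$-invariant part (using that ${\rm O}(A_{L_{EPW}})=\mathfrak{S}_{2}$ acts on $A_{L_{EPW}}=(\Z/2)^{2}$ by swapping the two generators). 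The main obstacle is thus the explicit evaluation of the Bruinier–Ma formula for this specific lattice at weights $17$ and $29$; this is a finite check, but one must verify that neither invariant subspace collapses. If a collapse occurred at weight $17$, a fallback would be to replace the Gritsenko factor by a product of two $\iota$-anti-invariant pieces (for example, a second quasi-pullback of $\Phi_{12}$ with a complementary sign behavior), whose product is again $\iota$-invariant of trivial character on $\Gamma$.
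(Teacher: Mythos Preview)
Your first step contains a sign error that invalidates the argument. The ``anti-invariance'' of $\Phi_{12}|_{L_{EPW}}$ under $\iota$ is meant as anti-invariance \emph{as a section of} $\mathcal{L}^{\otimes 42}\otimes\det$, i.e., the $\det$-twist is already built in. Concretely, the quotient $\Phi_{12}/\prod_{\delta}(\delta,\cdot)$ is a $\tilde{\iota}$-invariant section of $\mathcal{L}^{\otimes 42}\otimes\det_{II_{2,26}}$ on $\mathcal{D}_{II_{2,26}}$, and since $\det_{II_{2,26}}(\tilde{\iota})=1$ while $\det_{L_{EPW}}(\iota)=-1$, the restriction to $\mathcal{D}_{L_{EPW}}$ picks up a sign when you pass from $\det_{II_{2,26}}$ to $\det_{L_{EPW}}$. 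Thus $\Phi_{12}|_{L_{EPW}}$ is $\iota$-anti-invariant in $S_{42}(\tilde{\rm O}^{+}(L_{EPW}),\det)$, which means precisely that it does \emph{not} lie in $S_{42}(\Gamma,\det)$; rather it lies in $S_{42}(\Gamma,\det\cdot\epsilon)$, where $\epsilon$ is the sign character of $\Gamma/\tilde{\rm O}^{+}(L_{EPW})\simeq\mathfrak{S}_{2}$. Your reasoning ``$\det(\iota)=-1$ and $\iota\cdot\Phi=-\Phi$ combine to give character $\det$'' double-counts the sign.

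Consequently, multiplying by the $\iota$-\emph{invariant} Gritsenko lifts lands you in $S_{k+51}(\Gamma,\det\cdot\epsilon)$, not in $S_{k+51}(\Gamma,\det)$. The correct move, and what the paper does, is to multiply $\Phi_{12}|_{L_{EPW}}$ by the Gritsenko lift of the $\iota$-\emph{anti}-invariant part of $M_{k}(\rho_{L_{EPW}})$: the two $\epsilon$'s cancel and the product lies in $S_{k+51}(\Gamma,\det)$. Using $\dim M_{k}(\rho_{L_{EPW}})=[k/3]$ and $\dim M_{k}(\rho_{L_{EPW}})^{\iota}=[(k+2)/4]$, the anti-invariant part has dimension $5-4=1$ at $k=17$ and $9-7=2$ at $k=29$, which gives exactly the claimed bounds for $S_{68}(\Gamma,\det)$ and $S_{80}(\Gamma,\det)$. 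Note that your proposed fallback (a product of two anti-invariant factors) would again be $\iota$-invariant and hence would suffer from the same defect.
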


\begin{proof}
We abbreviate $L=L_{EPW}$. 
We first verify that $\Phi_{12}|_{L}$ is $\iota$-anti-invariant. 
Let $\iota'$ be the involution of the $D_{6}$ lattice induced by 
the involution of its Dynkin diagram. 
Then $\iota \oplus \iota'$ extends to an involution $\tilde{\iota}$ of $II_{2,26}$. 
The modular form $\Phi_{12}$ is $\tilde{\iota}$-invariant. 
If we run $\delta$ over the positive roots of $D_{6}$, 
the product $\prod_{\delta}(\delta, \cdot)$ is also $\tilde{\iota}$-invariant 
because $\iota'$ permutes the positive roots of $D_{6}$. 
Therefore $\Phi_{12}/\prod_{\delta}(\delta, \cdot )$ 
as a section of $\mathcal{L}^{\otimes 42}\otimes \det$ 
over $\mathcal{D}_{II_{2,26}}$ is $\tilde{\iota}$-invariant. 
Since $\det(\tilde{\iota})=1$ while $\det(\iota)=-1$, 
this shows that 
$\Phi_{12}|_{L}$ as a section of $\mathcal{L}^{\otimes 42}\otimes \det$ over $\mathcal{D}_{L}$
is anti-invariant under $\iota$. 

In order to construct $\iota$-invariant cusp forms of character $\det$, 
we take product of $\Phi_{12}|_{L}$ with 
the Gritsenko lift of the $\iota$-anti-invariant part of $M_{k}(\rho_{L})$.  
By the formulae in \cite{Br} and \cite{Ma2}, 
we see that 
$\dim M_{k}(\rho_{L})=[k/3]$ and 
$\dim M_{k}(\rho_{L})^{\iota} = [(k+2)/4]$ 
for $k>2$ odd. 
We also require the congruence condition  
$42+k+9 \equiv 20$ mod $3$, namely $k\equiv 2$ mod $3$. 
Now, when $k=17$ (resp.~$k=29$), 
the $\iota$-anti-invariant part has dimension $1$ (resp.~$2$). 
This proves our claim. 
\end{proof}

We can do the double cover construction over a Zariski open set of the moduli space. 
Let $LG^{\circ}\subset LG$ be the open set of generic $[A]$ 
which is ${\rm PGL}_{6}$-stable and has no nontrivial stabilizer. 
Let $D_{2}=\cup_{A}D_{2}^{A} \subset LG^{\circ}\times G(3, 6)$ 
be the universal family of $D^{A}_{2}$'s. 
We have the geometric quotients 
$\mathcal{M}=LG^{\circ}/{\rm PGL}_{6}$, 
$\mathcal{Z}=D_{2}/{\rm PGL}_{6}$ 
with projection $\mathcal{Z}\to \mathcal{M}$. 
The relative $\mathcal{O}(2)$ descends. 
Let $\tilde{\mathcal{Z}}\to \mathcal{Z}$ be the blow-up at ${\rm Sing}(\mathcal{Z})$, 
$\mathcal{B}\subset \tilde{\mathcal{Z}}$ be the exceptional divisor, 
and $\pi\colon \tilde{\mathcal{Z}}\to \mathcal{M}$ be the projection. 
As in the proof of Lemma \ref{lem:shrink}, 
we may shrink $\mathcal{M}$ to a Zariski open set $\mathcal{M}'\subset \mathcal{M}$ 
so that $\mathcal{B}|_{\mathcal{M}'}\sim 2K_{\pi}$. 
Then we can take the double cover of 
$\tilde{\mathcal{Z}}|_{\mathcal{M}'}$ branched over $\mathcal{B}|_{\mathcal{M}'}$. 
Contracting the ramification divisor relatively by using 
pullback of a multiple of the relative $\mathcal{O}(2)$, 
we obtain a universal family $\mathcal{F}\to \mathcal{M}'$ 
of double EPW cubes over $\mathcal{M}'$. 
Then let 
$\mathcal{F}_{n}=\mathcal{F}\times_{\mathcal{M}'} \cdots \times_{\mathcal{M}'} \mathcal{F}$ 
($n$ times). 

The period map $\mathcal{M}\to {\GD}$ is generically finite and dominant (\cite{IKKR} Proposition 5.1). 
By Lemma \ref{lem:cusp form EPW cube}, 
with $68=20+3\cdot 16$ and $80=20+3\cdot 20$, 
we see that $\mathcal{F}_{16}$ has positive geometric genus 
and $\kappa(\mathcal{F}_{20})>0$. 
This proves Theorem \ref{thm:main} in the case of double EPW cubes. 



\end{document}